\newtheorem{theo}{Theorem}[section]
\newtheorem{lemma}[theo]{Lemma}
\newtheorem{defi}[theo]{Definition}
\newtheorem{prop}[theo]{Proposition}
\newtheorem{conj}[theo]{Conjecture}
\numberwithin{equation}{section}
\def\bL{\mathbb{L}}
\def\C{\mathbb{C}}
\def\Z{\mathbb{Z}}
\def\bL{{\mathbf L}}
\def\pre-tr{\operatorname{pre-tr}}
\def\Hom{\operatorname{Hom}}
\newcommand{\cD}{{\mathcal D}}
\newcommand{\cA}{{\mathcal A}}
\newcommand{\cB}{{\mathcal B}}
\newcommand{\cC}{{\mathcal C}}
\newcommand{\Perf}{\operatorname{Perf}}
\newcommand{\im}{\operatorname{Im}}
\newcommand{\colim}{\operatorname{colim}}
\newcommand{\Vect}{{\rm Vect}}
\newcommand{\Ho}{\operatorname{Ho}}
\newcommand{\id}{\operatorname{id}}
\newcommand{\dgcat}{\operatorname{dgcat}}
\title[Generalized non-commutative degeneration conjecture]
{Generalized non-commutative degeneration conjecture}
\author{Alexander I. Efimov}
\address{Steklov Mathematical Institute of RAS, Gubkin 8, Moscow 119991, Russia}
\email{efimov@mccme.ru}
\thanks{MSC: 18G40, 16E40, 19D55.}
\thanks{This work is supported by the RSF under a grant 14-50-00005.}
\begin{document}

\begin{abstract}
In this paper we propose a generalization of the Kontsevich--Soibelman conjecture on the degeneration of Hochschild-to-cyclic spectral sequence for smooth and compact DG category. Our conjecture states identical vanishing of a certain map between bi-additive invariants of arbitrary small DG categories over a field of characteristic zero.

We show that this generalized conjecture follows from the Kontsevich--Soibelman conjecture and the so--called conjecture on smooth categorical compactification.
\end{abstract}

\maketitle

\tableofcontents

\keywords{}

\section{Introduction}

It is well known (\cite{H}, \cite{GH}) that de Rham cohomology of a compact K\"ahler manifold $X$ carries the Hodge decompositions:
$$H^n(X,\C)=\bigoplus\limits_{p+q=n}H^{p,q}(X),$$
where $H^{p,q}(X)=H^q(X,\Omega^p_X).$ This implies the following theorem.

\begin{theo}\label{th:Hodge_to_de_Rham_intro}(\cite{De}) Let $Y$ be a smooth projective algebraic variety over a field $k$ of characteristic zero. Then the spectral sequence
$$E_2^{p,q}=H^q(Y,\Omega^p_Y)\Rightarrow H^{p+q}_{DR}(Y)$$
degenerates at the second sheet.\end{theo}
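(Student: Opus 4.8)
The plan is to reduce to the case $k=\C$ and there invoke the Hodge decomposition recalled just above. \emph{Step 1 (numerical criterion).} Since $Y$ is proper over $k$, each $H^q(Y,\Omega^p_Y)$ is finite-dimensional, and hence so is $H^n_{DR}(Y)$, being assembled by iterated extensions from the $E_\infty$-terms. For $r\ge 2$ the sheet $E_{r+1}$ is a subquotient of $E_r$, so $\sum_{p+q=n}\dim_k E_\infty^{p,q}\le\sum_{p+q=n}\dim_k E_2^{p,q}$, with equality for all $n$ precisely when every differential $d_r$, $r\ge 2$, vanishes; moreover $\sum_{p+q=n}\dim_k E_\infty^{p,q}=\dim_k H^n_{DR}(Y)$. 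Hence degeneration at $E_2$ is equivalent to
$$\sum_{p+q=n}\dim_k H^q(Y,\Omega^p_Y)=\dim_k H^n_{DR}(Y)\qquad\text{for all }n\ge 0.$$

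\emph{Step 2 (reduction to $\C$).} Both sides of this identity are computed by complexes of coherent sheaves on the proper $k$-scheme $Y$ (for the right-hand side, use $H^n_{DR}(Y)=\bbH^n(Y,\Omega^\bullet_Y)$), and coherent cohomology commutes with flat base change; so the identity is unchanged when $k$ is replaced by any extension field, and it holds over $k$ as soon as it holds over a subfield over which $Y$ is defined. Being of finite presentation, $Y$ descends to a smooth projective scheme over a subfield $k_0\subseteq k$ finitely generated over $\Q$; such a $k_0$ is countable, hence embeds into $\C$. Applying base-change invariance once more, we reduce to $k=\C$.

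\emph{Step 3 (Hodge theory over $\C$).} Let $X$ be the compact complex manifold $Y^{\mathrm{an}}$, which is K\"ahler since $Y$ is projective. By GAGA, $H^q(Y,\Omega^p_Y)\cong H^q(X,\Omega^p_X)$ and $H^n_{DR}(Y)\cong\bbH^n(X,\Omega^\bullet_X)$; by the holomorphic Poincar\'e lemma $\Omega^\bullet_X$ resolves the constant sheaf $\C_X$, so $\bbH^n(X,\Omega^\bullet_X)\cong H^n(X,\C)$. The Hodge decomposition quoted above then gives $H^n(X,\C)=\bigoplus_{p+q=n}H^q(X,\Omega^p_X)$, and comparing dimensions yields exactly the identities of Step 1.

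\emph{Expected main obstacle.} The substance of the argument lies entirely in Step 3, and more precisely in the Hodge decomposition itself, whose proof rests on the K\"ahler identities and the Hodge theory of elliptic operators; here we are entitled to cite it. The only point requiring genuine care in the organization above is Step 2: one must arrange the spreading-out to be genuinely smooth and projective over a finitely generated $\Q$-subring (shrinking the spectrum of that ring if necessary) before passing to its fraction field, and check that the GAGA and base-change isomorphisms are compatible. A self-contained algebraic alternative, avoiding complex analysis, is the Deligne--Illusie method: spread $Y$ out to a smooth proper scheme over a finite field, lift it together with its Frobenius to the Witt vectors of length two, use the Cartier isomorphism to split the de Rham complex in the derived category up to degree $\dim Y$, deduce the equality of dimensions over $\bbF_q$, and transport degeneration back to characteristic zero by semicontinuity.
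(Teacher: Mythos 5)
Your argument is correct and is precisely the derivation the paper alludes to: it states the Hodge decomposition for compact K\"ahler manifolds and asserts that "this implies the following theorem," citing Deligne without further proof, and your Steps 1--3 (numerical criterion, Lefschetz-principle reduction to $\C$ via flat base change and spreading out, then GAGA plus the holomorphic Poincar\'e lemma and the Hodge decomposition) are exactly the standard way to fill in that implication. Your closing remark on the Deligne--Illusie alternative likewise matches the paper's own aside.
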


Here $H^{\bullet}_{DR}(Y)$ denotes the algebraic de Rham cohomology, which is defined as hypercohomology of the algebraic de Rham complex (in Zariski topology):
$$H^n_{DR}(Y)=H^n_{Zar}(Y,(\Omega_Y^{\bullet},d_{DR})).$$

Theorem \ref{th:Hodge_to_de_Rham_intro} was also proved algebraically by Deligne and Illusie \cite{DI}, using reduction to positive characteristic and the Cartier isomorphism.

Theorem \ref{th:Hodge_to_de_Rham_intro} can be reformulated as degeneration of the Hochschild-to-cyclic spectral sequence of the DG (differential graded) category $D^b_{coh}(X).$ Here we identify the triangulated category $D^b_{coh}(X)$ with its DG enhancement. Kontsevich and Soibelman conjectured that such degeneration takes place for any smooth and compact DG category (see Definition \ref{defi:smooth_proper}).

Hochschild homology $HH_{\bullet}(\cA)$ and cyclic homology $HC_{\bullet}(\cA)$ of a small DG category $\cA$ are defined in Section \ref{sec:HH_mixed}. We denote by $u$ a formal variable of cohomological degree $2.$

\begin{conj}\label{conj:Kontsevich_degeneration_intro}(\cite{KS}) Let $\cA$ be a smooth and compact DG category over a field $k$ of characteristic zero. Then the spectral sequence
$E_1=HH_{\bullet}(\cA)\otimes_k (k[u^{\pm 1}]/uk[u])\Rightarrow HC_{\bullet}(\cA)$ degenerates at the first sheet.\end{conj}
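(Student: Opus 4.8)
This is the non-commutative Hodge-to-de Rham degeneration conjecture; it is by now a theorem (Kaledin, with a later proof by Mathew via topological Hochschild homology), and the plan is to follow the Deligne--Illusie strategy transplanted to the DG-categorical setting: reduce to positive characteristic, establish a non-commutative Cartier isomorphism there, and split it by means of a lift to the second Witt vectors.

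\textbf{Step 1: spreading out.} Degeneration of the spectral sequence in cohomological degrees $\le N$ amounts to the vanishing of the finitely many differentials $d_1,\dots,d_N$, and this is a constructible condition. Given a smooth and compact $\cA$ over $k$, I would choose a finitely generated $\Z$-subalgebra $R\subset k$ over which $\cA$, together with its smooth-and-proper structure, is defined, and over which, after localizing $R$, $HH_\bullet(\cA)$ and $HC_\bullet(\cA)$ are finitely generated projective $R$-modules compatible with base change. It then suffices to show that for a Zariski-dense set of closed points $\Spec \mathbb{F}_q \hookrightarrow \Spec R$ the differentials $d_1,\dots,d_N$ vanish, with the available range $N=N(\mathrm{char}\,\mathbb{F}_q)\to\infty$; passing back to the generic point kills every differential over $k$.

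\textbf{Step 2: Cartier in characteristic $p$ and the $W_2$-lift.} Over a perfect field of characteristic $p$, I would prove degeneration in degrees $\lesssim p$ for any smooth and compact DG category admitting a flat lift to the second Witt vectors $W_2$. The two ingredients are: (i) a non-commutative Cartier isomorphism, identifying (after a Tate twist) the associated graded of the conjugate filtration on periodic cyclic homology with the Hochschild homology of the Frobenius twist of $\cA$; and (ii) a splitting of that filtration, in the relevant range, produced from the $W_2$-lift exactly as a $W_2$-lift produces the Deligne--Illusie splitting in the commutative case. For (i) I would use the cyclic-object structure on the Hochschild complex, edgewise subdivision with its residual $\Z/p$-action, and the polynomial-Witt-vectors / trace-functor formalism that manufactures the Cartier map; alternatively one runs the argument through topological Hochschild homology, where the Tate diagonal $THH(\cA)\to THH(\cA)^{tC_p}$ plays the role of Frobenius and a Segal-conjecture-type boundedness statement for smooth proper inputs supplies the degeneration.

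\textbf{Main obstacle.} The difficulty is concentrated entirely in Step 2. There is no naive Hochschild--Kostant--Rosenberg decomposition for a general DG category, so the Cartier isomorphism has to be constructed by genuinely homotopy-theoretic means, and one must verify that the $W_2$-lift splits it through a range that grows without bound as $p\to\infty$ — otherwise the characteristic-zero conclusion would only be partial. The hypotheses of smoothness and compactness enter precisely to force the Hochschild and cyclic complexes to be perfect, hence bounded, so that in each fixed range of degrees only finitely many differentials are at stake and the limiting argument of Step 1 actually delivers full degeneration over $k$.
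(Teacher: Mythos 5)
You have set out to prove a statement that the paper does not prove: Conjecture \ref{conj:Kontsevich_degeneration_intro} is stated as a conjecture, attributed to Kontsevich--Soibelman, and is used as a \emph{hypothesis} in the paper's main result (Theorem \ref{th:Kontsevich_implies_generalized_intro}). The paper only records that Kaledin established it for DG algebras concentrated in non-negative degrees, and separately shows (Proposition \ref{prop:equivalence_of_degeneration_conjectures}) that the spectral-sequence formulation is equivalent to the vanishing of the boundary map $\delta:HH_{\bullet}(\cA)\to HC^-_{\bullet+1}(\cA)$. So there is no in-paper proof to compare against; the relevant question is whether your sketch stands on its own.

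It does not, and you essentially say so yourself. Step 1 (spreading out over a finitely generated $\Z$-subalgebra, using that smoothness and compactness force $HH_\bullet$ and $HC_\bullet$ to be perfect, and that degeneration in a fixed range of degrees is a constructible/semicontinuous condition) is a correct and standard reduction. But the entire mathematical content sits in Step 2, where you invoke a non-commutative Cartier isomorphism for the conjugate filtration on periodic cyclic homology and a splitting of it from a $W_2$-lift, in a range of degrees growing with $p$ --- and these are named rather than constructed. The cyclic-object/edgewise-subdivision/polynomial-Witt-vector formalism you allude to is precisely the hard technical core of Kaledin's work, and the Tate-diagonal/Segal-conjecture route is the core of Mathew's; neither is something one can treat as a black box in a proof. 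There is also a point you should make explicit in Step 1: you need the $W_2$-lift (or flat lift over the truncated Witt vectors) of the reduction of $\cA$ at the chosen closed points, which is supplied by the spreading-out itself (the category is defined over $R$, hence over $R/\mathfrak{m}^2$), but this must be said, since the degeneration statement in characteristic $p$ is false without some liftability hypothesis. As it stands your text is an accurate roadmap of the known proofs, not a proof; within the logic of this paper the statement should simply be taken as the conjectural input that it is.
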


It follows from the paper \cite{Ke1} that Conjecture \ref{conj:Kontsevich_degeneration_intro} in the case $\cA=D^b_{coh}(X)$
is indeed equivalent to Theorem \ref{th:Hodge_to_de_Rham_intro} for the variety $X.$

This conjecture was proved by Kaledin for DG algebras concentrated in non-negative degrees \cite{Ka}, via the method of Delifne--Illusie.

We propose a certain generalization of Conjecture \ref{conj:Kontsevich_degeneration_intro}, which states identical vanishing of a certain map between bi-additive invariants of small DG categories. 

In Section \ref{sec:HH_mixed} (see \eqref{eq:long_exact_HC^-}) we define for a small DG category $\cA$ the boundary map  $$\delta:HH_n(\cA)\to HC^-_{n+1}(\cA),$$ where $HC^-_{\bullet}(\cA)$ denotes the negative cyclic homology. 
We put $K_n(\cA)=K_n(\Perf(\cA))$ for all $n\in\Z,$ where we consider Waldhausen K-theory \cite{W}. Recall that we have a functorial Chern character on K-theory with values in Hochschild homology \cite{CT}:
$$ch:K_n(\cA)\to HH_n(\cA),\quad n\in\Z.$$
This Chern character passes through $HC^-_{\bullet}(\cA)$ (see \cite{CT}), but we will not need this.

\begin{conj}\label{conj:generalized_degeneration_intro} Let $\cB$ and $\cC$ be small DG categories over a field $k$ of characteristic zero. We denote by $\varphi_n$ the following composition:
$$\varphi_n:K_n(\cB\otimes\cC)\stackrel{ch}{\to}(HH_{\bullet}(\cB)\otimes HH_{\bullet}(\cC))_n\stackrel{\id\otimes\delta}{\to} (HH_{\bullet}(\cB)\otimes HC^{-}_{\bullet}(\cC))_{n+1}.$$
Then $\varphi_n=0$ for $n\leq 0.$\end{conj}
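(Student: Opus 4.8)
The plan is to reduce Conjecture \ref{conj:generalized_degeneration_intro} to Conjecture \ref{conj:Kontsevich_degeneration_intro} together with the conjecture on smooth categorical compactification, following the strategy announced in the abstract. The first observation is that $\varphi_n$ is a bi-additive invariant of the pair $(\cB,\cC)$: both $K$-theory and Hochschild/cyclic homology are localizing invariants, and the Chern character and the boundary map $\delta$ are functorial, so $\varphi$ commutes with filtered colimits and sends semiorthogonal decompositions in either variable to direct sums. In particular, to prove $\varphi_n=0$ it suffices to check it after replacing $\cB$ and $\cC$ by categories that generate them under the operations preserved by $\varphi$. Since every small DG category over $k$ is a filtered colimit of its finitely generated DG subcategories, and by the compactification conjecture every such finitely generated (hence homotopically finitely presented) DG category $\cC$ admits a fully faithful embedding into a smooth and compact DG category $\bar\cC$ whose quotient $\bar\cC/\cC$ is again smooth and compact, one can hope to run an induction/dévissage that reduces the general case to the case where both $\cB$ and $\cC$ are smooth and compact.

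The second step is to treat the case where $\cB$ and $\cC$ are both smooth and compact. Here the key point is that $\cB\otimes\cC$ is again smooth and compact, so by the K\"unneth formula $HH_\bullet(\cB\otimes\cC)\cong HH_\bullet(\cB)\otimes HH_\bullet(\cC)$, and similarly for negative cyclic homology one has a natural map (an isomorphism after completion) $HC^-_\bullet(\cB)\otimes HC^-_\bullet(\cC)\to HC^-_\bullet(\cB\otimes\cC)$. For smooth and compact categories the Chern character $ch:K_n(\cB\otimes\cC)\to HH_n(\cB\otimes\cC)$ lifts canonically through $HC^-_n(\cB\otimes\cC)$, and Conjecture \ref{conj:Kontsevich_degeneration_intro} for $\cC$ says exactly that the map $HH_\bullet(\cC)\to HH_\bullet(\cC)[1]$ induced by $\delta$ followed by the projection $HC^-\to HH$ vanishes; equivalently $\delta$ factors, on the image of $ch$, through the subspace killed by $u$. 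The idea is then that $(\id\otimes\delta)\circ ch$ factors through $(\id\otimes\delta)$ applied to a class that already lives in the negative cyclic homology of $\cC$, and the Kontsevich--Soibelman degeneration for $\cC$ forces this composite to be zero. One must be careful that the relevant vanishing is for the \emph{composite} $\id\otimes\delta$ after $ch$, not for $\delta\circ ch$ alone; this is where smoothness and compactness of $\cB$ enter, guaranteeing that tensoring with $HH_\bullet(\cB)$ is exact and detects nothing extra.

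The third step is the dévissage itself. Given arbitrary $\cC$, choose a compactification $\cC\hookrightarrow\bar\cC$ with cofiber $\bar\cC/\cC =: \cQ$, both smooth and compact. The localization sequence $\cC\to\bar\cC\to\cQ$ induces long exact sequences in $K$-theory, in $HH_\bullet$, and in $HC^-_\bullet$, compatibly with $ch$ and $\delta$. Tensoring with $\cB$ preserves this localization sequence (tensor product of DG categories is exact in each variable on the level of localizing invariants). One then gets a morphism of long exact sequences, and since $\varphi_n$ vanishes for the smooth and compact members $\bar\cC$ and $\cQ$ (in all degrees, by Step 2, not just $n\le 0$), a diagram chase shows that $\varphi_n$ for $\cC$ is controlled by the connecting maps; the restriction to $n\le 0$ is exactly what is needed to kill the potential contribution coming from $K_{n+1}(\cB\otimes\cQ)$ or from the failure of the Chern character to interact well with the boundary in positive degrees. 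Running the same argument in the $\cB$-variable, and then passing to filtered colimits, completes the reduction.

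The main obstacle I expect is Step 3, the compatibility of everything in sight with the boundary map of the localization sequence. It is routine that $ch$ is compatible with localization sequences, but the boundary map $\delta:HH_n\to HC^-_{n+1}$ is the connecting homomorphism of a \emph{different} long exact sequence (the $u$-adic one relating $HH$, $HC^-$ and $HC^-[2]$), and one must verify that it commutes — up to a controlled sign — with the localization connecting maps, and moreover that under the K\"unneth identifications $\id\otimes\delta$ is compatible with the K\"unneth splitting of the localization sequence for $\cB\otimes(-)$. Getting the signs and the degree bookkeeping exactly right, and in particular pinning down why the conclusion holds for $n\le0$ rather than all $n$, is the technical heart of the argument; everything else is formal manipulation of localizing invariants together with the two input conjectures.
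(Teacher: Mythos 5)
Your overall strategy (reduce to the smooth and compact case via the compactification conjecture, where Kontsevich--Soibelman degeneration gives $\delta=0$ on $HH_\bullet(\cC)$ and hence $\varphi_n=0$ trivially) matches the paper's in spirit, but the reduction you propose does not work and misses the three mechanisms that actually carry the paper's proof. First, you misread Conjecture \ref{conj:categorical_compactification_intro}: it presents a homotopically finite $\cC$ as a \emph{quotient} $\tilde{\cC}/E$ of a smooth and compact category, not as a full subcategory of one with smooth and compact cofiber; the kernel of the localization $\Perf(\tilde{\cC})\to\Perf(\cC)$ is the thick subcategory generated by $E$ and is in general neither smooth nor compact, so the localization sequence you want to chase does not have two smooth-and-compact terms. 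Moreover, even with a genuine localization sequence, vanishing of $\varphi_n$ on two of the three terms does not propagate to the third by a diagram chase: a class in $K_n(\cB\otimes\cC)$ need not lift to $K_n(\cB\otimes\tilde{\cC})$, and the target of $\varphi$ is not arranged into an exact sequence in a way that lets you conclude. What the paper actually extracts from the compactification is much weaker and more precise: only the single class $[I_{\cC}]\in K_0(\cC\otimes\cC^{op})$ is known to be the image of $[I_{\tilde{\cC}}]$ (Proposition \ref{prop:DG_localizations}, part 3), so only $\varphi_0([I_{\cC}])=0$ comes for free.

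The two ideas you are missing are then: (a) the gluing trick --- for homotopically finite $\cB,\cC$ and a perfect bimodule $M$ one forms $\cD=\cB\sqcup_M\cC^{op}$, which is again homotopically finite, and reads off $\varphi_0([M])=0$ from $\varphi_0([I_{\cD}])=0$ via the distinguished triangle \eqref{eq:diagonal_for_gluing} and the direct sum decomposition of Lemma \ref{lem:additivity_of_Hochschild}; this is how one passes from diagonal classes to \emph{all} of $K_0$, since $K_0$ classes are differences of classes of objects; and (b) the suspension argument for negative degrees --- setting $\Sigma\cB=(\Vect_{\infty}\otimes\cB)/\cB$ and using the Eilenberg swindle (Lemma \ref{lem:vanishing_for_Vect_infty}) gives $K_{-l-1}(\cB\otimes\cC)\cong K_{-l}(\Sigma\cB\otimes\cC)$ and $HH_\bullet(\Sigma\cB)\cong HH_\bullet(\cB)[1]$, which reduces $n<0$ to $n=0$ and is the actual reason the conjecture is restricted to $n\le 0$: for $n>0$ classes in $K_n$ are not represented by objects and the gluing step has nothing to bite on. Your stated explanation of the restriction to $n\le 0$ is not an argument. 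Finally, in your Step 2 the appeal to smoothness and compactness of $\cB$ to ensure that ``tensoring with $HH_\bullet(\cB)$ is exact'' is a red herring: over a field tensoring is always exact, and once $\delta=0$ on $HH_\bullet(\cC)$ the composite $(\id\otimes\delta)\circ ch$ vanishes with no further input.
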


It is not hard to check (see Proposition \ref{prop:generalized_implies_Kontsevich}) that Conjecture \ref{conj:generalized_degeneration_intro} implies Conjecture \ref{conj:Kontsevich_degeneration_intro}.

Recall a conjecture on smooth categorical compactification. Denote by $\Ho_M(\dgcat_k)$ the homotopy category of small DG categories over $k$ with respect to Morita model structure \cite{Tab}. The notion of homotopically finite DG category is defined in \cite{TV}, Definition 2.4.

\begin{conj}\label{conj:categorical_compactification_intro} For any homotopically finite DG category $\cA$ there exist a smooth and compact DG category $\tilde{\cA}$ and an object $E\in\tilde{\cA},$ such that $\cA\cong \tilde{\cA}/E$ in $\Ho_M(\dgcat_k).$\end{conj}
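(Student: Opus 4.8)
The plan is to reduce the statement to an explicit combinatorial situation and then argue by induction on a cell decomposition. First I would record the elementary reformulation: since $\Perf(\tilde{\cA})$ is idempotent complete, the thick subcategory generated by a finite set of objects $E_1,\dots,E_r$ coincides with the thick closure of $E_1\oplus\cdots\oplus E_r$, so a Drinfeld quotient by finitely many objects agrees, in $\Ho_M(\dgcat_k)$, with a quotient by a single object; it therefore suffices to produce a smooth and compact $\tilde{\cA}$ together with a \emph{finite} set of objects whose quotient is $\cA$. Next, by the definition of homotopical finiteness (\cite{TV}, Definition 2.4), $\cA$ is Morita equivalent to $\Perf(A)$ for $A$ a retract of a finite cell DG algebra; absorbing the retract into the argument, I would induct on the number of cells, so that each step treats a single attachment $A'=A\amalg_{k[x]}k[x,y]$, i.e. $A'$ is obtained from $A$ by freely adjoining a generator $y$ with $dy$ a prescribed cycle $a\in A$ --- either a \emph{free} attachment ($a=0$) or a \emph{relation-killing} one ($a\neq 0$).

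\textbf{Base case and geometric models.} The base of the induction is trivial: for $A$ a finite product of copies of $k$, $\Perf(A)$ is already smooth and compact, with empty generating set. For honestly geometric pieces I would invoke resolution of singularities: if $X$ is a smooth variety over $k$, choose a Nagata compactification and then, since $\mathrm{char}\,k=0$, a Hironaka resolution $\tilde{X}$ of it which is the identity over $X$ and whose boundary $\tilde{X}\setminus X$ is a simple normal crossings divisor with components $D_1,\dots,D_m$; then $\tilde{X}$ is smooth projective, so $\Perf(\tilde{X})=D^b_{coh}(\tilde{X})$ is smooth and compact, and by Thomason--Trobaugh localization $\Perf(X)$ is the Verdier quotient of $\Perf(\tilde{X})$ by the thick subcategory $\langle\cO_{D_1},\dots,\cO_{D_m}\rangle$ of perfect complexes supported on the boundary --- a finite generating set. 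In particular the ``universal'' degree-zero free cell over $k$, namely $\Perf(\A^1)\cong\Perf(\PP^1)/\langle\cO_{\infty}\rangle$, is handled; it is the prototype for the difficult part of the induction.

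\textbf{Inductive step.} For the inductive step I would homogenize each new cell generator: given a smooth compact $\tilde{\cA}$ with a finite object set $E$ and $\tilde{\cA}/E\cong\Perf(A)$, I would build a compactification of $\Perf(A')$ by adjoining the homogenization of the new generator $y$ together with an auxiliary degree-one variable $z$ (replacing $dy=a$ by its homogenization) and passing to the associated ``$\mathrm{Proj}$''-type DG category; the ``$z=0$'' locus contributes one further object, so the kernel stays finite. In the geometric situation this reproduces the projective closure used above, and for a free degree-zero attachment over $k$ it is precisely the passage from $\Perf(\A^1)$ to $\Perf(\PP^1)$; higher-degree and relation-killing attachments are homogenized accordingly. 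One then has to check, at every step, that the homogenized category remains smooth and compact and that its quotient by the enlarged finite object set recovers $\Perf(A')$.

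\textbf{Main obstacle.} The heart of the matter is the free attachments in the genuinely noncommutative regime: already for the free algebra $k\langle x,y\rangle$ on two degree-zero generators the naive homogenization has exponential growth, so its ``$\mathrm{Proj}$'' fails to be compact, and a substantially subtler compactification is required; simultaneously one must ensure (i) smoothness --- which is exactly the place where resolution of singularities, available because $\mathrm{char}\,k=0$, must be imported into an intrinsically categorical construction (for instance via a categorical resolution following Kuznetsov--Lunts, then upgraded to an honest smooth \emph{and} compact Morita model) --- (ii) properness of the compactification, and (iii) that the resulting kernel is generated by \emph{finitely many} objects; none of these is automatic along the induction. A parallel line I would pursue is to use that homotopical finiteness forces the diagonal bimodule $\Delta_{\cA}$ to be perfect, extract a finite ``cellular'' model of $\cA$ directly from a bounded resolution of $\Delta_{\cA}$ by representable-by-representable bimodules, and assemble $\tilde{\cA}$ from that model; making either route precise, and in particular establishing smoothness of the noncommutative compactification, is the principal difficulty and the reason the statement is still only a conjecture.
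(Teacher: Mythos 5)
The first thing to say is that the paper contains no proof of this statement: it is Conjecture \ref{conj:categorical_compactification_intro}, one of the two conjectural hypotheses of Theorem \ref{th:Kontsevich_implies_generalized_intro}, and the only supporting material in the text is the motivating example (Nagata compactification plus Hironaka resolution giving $D^b_{coh}(X)\cong D^b_{coh}(\bar{X})/D^b_{coh,\bar{X}\setminus X}(\bar{X})$ for a smooth variety $X$), which you correctly reproduce. So there is nothing to compare your argument against, and what you have written is not a proof but a program; to your credit, you say so explicitly.

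The gap you identify is genuine and is the entire content of the problem. Your preliminary reductions are fine: replacing finitely many objects by their direct sum, using that a homotopically finite $\cA$ is a Morita retract of a finite cell DG algebra, and inducting on cells are all legitimate. But the inductive step is not an argument: ``homogenize the new generator and pass to a $\mathrm{Proj}$-type category'' has no precise meaning for a noncommutative DG algebra, and in the one test case where it does have one beyond $\Perf(\A^1)\subset\Perf(\PP^1)$ --- the free algebra $k\langle x,y\rangle$ --- you observe yourself that the naive construction fails to be compact, without supplying a replacement. The alternative route via a perfect resolution of the diagonal bimodule is equally unspecified: a finite cellular model of $\cA$ is essentially what homotopy finiteness already hands you, and it does not by itself produce a compactification. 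One should add that no completion of this program is possible: the conjecture was subsequently \emph{disproved} by the author of this paper (A.~Efimov, ``Categorical smooth compactifications and generalized Hodge-to-de Rham degeneration'', Invent. Math. 222 (2020)), who exhibited a homotopically finite DG category admitting no smooth categorical compactification, while proving Conjecture \ref{conj:generalized_degeneration_intro} unconditionally by other means. So any attempt to push your induction through must break down, and the noncommutative attachment step is precisely where.
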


To explain why we call Conjecture \ref{conj:categorical_compactification_intro} a conjecture on smooth categorical compactification, we consider the following example. Let $X$ be a smooth algebraic variety over a field $k$ of characteristic zero. According to theorems of Nagata \cite{N} and Hironaka \cite{Hi1}, \cite{Hi2}, there exists an open embedding $X\hookrightarrow \bar{X},$ where $\bar{X}$ is a smooth and proper variety over $k.$ Then we have an equivalence $$D^b_{coh}(X)\cong D^b_{coh}(\bar{X})/D^b_{coh,\bar{X}\setminus X}(\bar{X}).$$
Therefore, putting $\tilde{\cA}=D^b_{coh}(\bar{X}),$ $\cA=D^b_{coh}(X),$ for any generator $E\in D^b_{coh,\bar{X}\setminus X}(\bar{X})$ we have a Morita equivalence $\cA\simeq\tilde{\cA}/E.$ Since $\tilde{\cA}$ is smooth and compact, we can treat the DG category $\tilde{\cA}$ as "smooth categorical compactification"\, of the DG category $\cA.$

The main result of this paper is the following theorem.

\begin{theo}\label{th:Kontsevich_implies_generalized_intro} Conjectures \ref{conj:Kontsevich_degeneration_intro} and \ref{conj:categorical_compactification_intro} imply Conjecture \ref{conj:generalized_degeneration_intro}.\end{theo}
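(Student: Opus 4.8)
The plan is to reduce Conjecture~\ref{conj:generalized_degeneration_intro} to the case where $\cC$ is smooth and compact, and then invoke Conjecture~\ref{conj:Kontsevich_degeneration_intro} directly. First I would reduce to the situation where $\cB$ and $\cC$ are homotopically finite: since $K$-theory, Hochschild and cyclic homology all commute with filtered homotopy colimits, and every small DG category is a filtered homotopy colimit of homotopically finite ones, the map $\varphi_n$ for a general pair $(\cB,\cC)$ is a filtered colimit of the corresponding maps for homotopically finite pairs; hence it suffices to prove $\varphi_n=0$ when both $\cB$ and $\cC$ are homotopically finite. For such $\cC$, apply Conjecture~\ref{conj:categorical_compactification_intro} to obtain a smooth and compact DG category $\tilde{\cC}$ together with an object $E\in\tilde{\cC}$ such that $\cC\cong\tilde{\cC}/E$ in $\Ho_M(\dgcat_k)$. (I would do the same for $\cB$, getting $\tilde\cB$, although it is $\cC$ that carries the $\delta$-map, so the role of $\tilde\cB$ is only to control $K$-theory via a localization sequence; if the argument only needs $\cC$ replaced it is cleaner to leave $\cB$ alone.)

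The key point is then to compare the localization sequence in $K$-theory for $E\subset\tilde{\cC}$ with the corresponding localization-type behaviour of Hochschild and negative cyclic homology, \emph{tensored with $\cB$}. Concretely, the Verdier quotient $\tilde{\cC}\to\tilde{\cC}/E$ fits into a Waldhausen localization sequence
\[
\Perf(\langle E\rangle)\to\Perf(\tilde{\cC})\to\Perf(\tilde{\cC}/E),
\]
where $\langle E\rangle$ is the thick subcategory generated by $E$; since $\langle E\rangle$ is generated by a single object it is Morita equivalent to a DG algebra, in fact to one which is homotopically finite, hence the whole diagram consists of homotopically finite (but not necessarily smooth/compact) categories except that $\tilde{\cC}$ is smooth and compact. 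Tensoring everything with $\cB$ and applying the Chern character, together with $\id\otimes\delta$, gives a map of long exact (or at least compatible) sequences. On the $\tilde{\cC}$-term the composite $\varphi_n$ literally \emph{is} the Hochschild-to-negative-cyclic obstruction for the smooth compact category $\cB\otimes\tilde{\cC}$ — more precisely, one checks that for $\cC'$ smooth and compact the map $\id\otimes\delta\circ ch$ vanishes because $HH_\bullet(\cC')\to HC^-_{\bullet+1}(\cC')$ composed with any Chern character factors through the degeneration of the Hochschild-to-cyclic spectral sequence of $\cC'$, which holds by Conjecture~\ref{conj:Kontsevich_degeneration_intro}. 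A short argument (the content of the numbered displays in Section~\ref{sec:HH_mixed} together with Proposition~\ref{prop:generalized_implies_Kontsevich}) shows that $\delta\circ ch=0$ on $K_\bullet(\cC')$ for smooth compact $\cC'$, and since $HH_\bullet(\cB)$ is flat this propagates to $\id\otimes\delta\circ ch=0$ on $K_\bullet(\cB\otimes\cC')$.

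To finish, I would chase the diagram: an element $x\in K_n(\cB\otimes\cC)=K_n(\cB\otimes(\tilde{\cC}/E))$ with $n\le 0$ lifts, after the localization sequence, to a class $\tilde x\in K_n(\cB\otimes\tilde{\cC})$ modulo the image of $K_n$ of the $\cB\otimes\langle E\rangle$-term and a boundary from $K_{n-1}$. The image $\varphi_n(x)$ equals, by naturality of $ch$ and $\delta$ with respect to the quotient functor, the image of $\varphi_n(\tilde x)$, which is $0$ by the smooth-compact case; the contributions coming from the $\langle E\rangle$-term vanish because $\delta\circ ch$ composed with $\id\otimes\cdot$ is additive in localization sequences and the boundary terms live in $K$-groups in degrees $n-1\le -1$, where one runs the argument again — so in fact the cleanest formulation is to prove the vanishing of $\varphi_n$ for all $n\le 0$ simultaneously by exhibiting $\varphi$ as a natural transformation of additive invariants that kills the smooth-compact building blocks.

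The step I expect to be the main obstacle is the \emph{compatibility of $\delta$ (the Hochschild-to-negative-cyclic boundary) with localization sequences after tensoring with $\cB$} — i.e.\ verifying that $\id\otimes\delta\circ ch$ is a genuine natural transformation between the two bi-additive invariants $K_\bullet(\cB\otimes-)$ and $(HH_\bullet(\cB)\otimes HC^-_{\bullet+1}(-))$ in a way that interacts correctly with the categorical compactification, so that the vanishing on the smooth compact category $\tilde{\cC}$ (where Conjecture~\ref{conj:Kontsevich_degeneration_intro} applies) actually forces vanishing on the quotient $\cC=\tilde{\cC}/E$. Making this precise requires knowing that $HH_\bullet$, $HC^-_\bullet$ and $K_\bullet$ all send the relevant short exact sequences of DG categories to exact triangles (localization theorems of Keller/Blumberg--Mandell/Thomason--Trobaugh), and that the Chern character and $\delta$ are natural for these, uniformly in the auxiliary factor $\cB$; the flatness of $HH_\bullet(\cB)$ over $k$ (true over a field) is what allows $HH_\bullet(\cB)\otimes-$ to preserve exactness. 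Once that machinery is in place the diagram chase is formal.
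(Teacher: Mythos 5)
Your first reduction (filtered colimits to homotopically finite categories) and your observation that Conjecture~\ref{conj:Kontsevich_degeneration_intro} forces $(\id\otimes\delta)\circ ch=0$ on $K_n(\cB\otimes\cC')$ whenever $\cC'$ is smooth and compact are both correct and match the paper. But the central descent step --- from the compactification $\tilde{\cC}$ back to $\cC=\tilde{\cC}/E$ --- is a genuine gap, and you have correctly located it yourself without closing it. The localization-sequence chase you propose fails for three reasons. First, a class $x\in K_n(\cB\otimes\cC)$ need not lift to $K_n(\cB\otimes\tilde{\cC})$ (the quotient functor on perfect complexes is only a localization up to direct summands, and even granting the long exact sequence the obstruction sits in $K_{n-1}(\cB\otimes\langle E\rangle)$); your suggestion to ``run the argument again'' on these boundary terms produces a new compactification and a new obstruction in degree $n-2$, so the induction never terminates. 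Second, and more fundamentally, $\varphi_n=(\id\otimes\delta)\circ ch$ is not a morphism of localization long exact sequences: $\delta$ is the Bockstein of the $u$-multiplication sequence \eqref{eq:short_exact_negative_cyclic}, not of a localization sequence, and the target $(HH_\bullet(\cB)\otimes HC^-_{\bullet+1}(-))$ is not shown (and is not claimed in the paper) to fit into a long exact sequence compatible with the K-theoretic boundary. So even for the liftable part of $K_n(\cB\otimes\cC)$ the vanishing does not formally descend.

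The paper avoids all of this by never attempting to lift arbitrary K-theory classes. The only class it transports along the compactification is the diagonal class $[I_{\cA}]$, which \emph{does} descend because $\bL(F\otimes F^{op})^*I_{\tilde{\cA}}\cong I_{\cA}$ (Proposition~\ref{prop:DG_localizations}, part 3); combined with Conjecture~\ref{conj:Kontsevich_degeneration} this gives $\varphi_0([I_{\cA}])=0$ for every homotopically finite $\cA$. The passage from diagonal classes to arbitrary classes is then achieved by a gluing trick that is entirely absent from your proposal: given a perfect bimodule $M\in\Perf(\cB\otimes\cC)$, one forms $\cD=\cB\sqcup_M\cC^{op}$, which is again homotopically finite, and the distinguished triangle \eqref{eq:diagonal_for_gluing} exhibits $-ch([M])$ as a direct summand component of $ch([I_{\cD}])$ under the decomposition \eqref{eq:direct_sum_decomposition}; vanishing of $\varphi_0([I_{\cD}])$ therefore forces $\varphi_0([M])=0$, hence $\varphi_0=0$ on all of $K_0(\cB\otimes\cC)$. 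Finally, the extension to $n<0$ is not done by chasing boundary maps but by the suspension $\Sigma\cB=(\Vect_{\infty}\otimes\cB)/\cB$, which simultaneously shifts $K_{-l-1}(\cB\otimes\cC)\cong K_{-l}(\Sigma(\cB)\otimes\cC)$ and $HH_\bullet(\Sigma\cB)\cong HH_\bullet(\cB)[1]$, reducing degree $-l-1$ to degree $-l$. To repair your argument you would need to supply both the gluing mechanism (or some substitute converting vanishing on diagonal classes into vanishing on all classes) and the suspension device for negative degrees.
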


The paper is organized as follows.

In Section \ref{sec:DG_cat} we recall some basic constructions related with DG categories and their derived categories.

Section \ref{sec:HH_mixed} is devoted to the mixed Hochschild complex and cyclic homology. It consists mainly of definitions.

In Section \ref{sec:generalized_degeneration} we first proved that Conjecture \ref{conj:generalized_degeneration_intro} implies Conjecture \ref{conj:Kontsevich_degeneration_intro} (Proposition \ref{prop:generalized_implies_Kontsevich}). Then we prove Theorem \ref{th:Kontsevich_implies_generalized_intro}.

\section{DG categories}
\label{sec:DG_cat}

We refer the reader to the papers \cite{Ke3}, \cite{Ke4} for a general introduction to DG categories and DG modules. DG quotients of DG categories are introduced in the paper \cite{Dr}. The notion of homotopically finite DG algebras and DG categories is introduced in the paper \cite{TV}.

Fix some basic field $k.$ All DG categories under consideration will be defined over $k.$ Moreover, in this and other sections we put 
$$-\otimes-:=-\otimes_k-,\quad \Hom(-,-):=\Hom_k(-,-).$$

All DG modules in this paper are right by default. For a DG category $\cA$ we denote by $\cA^{op}$ the opposite DG category. For a DG functor $F:\cA\to\cB$ we denote by $F^{op}:~\cA^{op}\to\cB^{op}$ the corresponding functor between the opposite DG categories. For a pair of DG functors $F_1:\cA_1\to\cB_1,$ $F_2:\cA_2\to\cB_2$ we denote by $F_1\otimes F_2:\cA_1\otimes\cA_2\to\cB_1\otimes\cB_2$ their tensor product.

For a small DG category $\cA$ we denote by $\text{Mod-}\cA$ the abelian category of DG $\cA$-modules. We denote by $D(\cA)$ the derived category of $\cA,$ which is obtained from $\text{Mod-}\cA$ by inverting quasi-isomorphisms. We denote by $\Perf(\cA)\subset D(\cA)$ the full subcategory of perfect complexes. It is known to coincide with the subcategory of compact objects $\Perf(\cA)=D(\cA)^c.$

For a DG functor $F:\cA\to\cB$ we denote by $F_*:\text{Mod-}\cB\to\text{Mod-}\cA$ the restriction of scalars functor (i.e. the functor of composition with $F$). Its left adjoint (extension of scalars) is denoted by $F^*:\text{Mod-}\cA\to\text{Mod-}\cB.$ We have derived functors $F_*:D(\cB)\to D(\cA),$ $\bL F^*:D(\cA)\to D(\cB).$ Recall the notion of Morita equivalence.

\begin{defi}\label{defi:Morita_equivalence} A DG functor $F:\cA\to\cB$ between small DG categories is called a Morita equivalence if the functor
$\bL F^*:D(\cA)\to D(\cB)$ is an equivalence.\end{defi}

If $\cA$ is a small DG category, then for any DG modules $M\in D(\cA),$ $N\in D(\cA^{op})$ we have the derived tensor product $M\stackrel{\bL}{\otimes}_{\cA}N\in D(k).$

For any small DG category $\cA$ we denote by $I_{\cA}\in\text{Mod-}(\cA\otimes\cA^{op})$ the diagonal bimodule, which is given by the formula $$I_{\cA}(X,Y)=\cA(X,Y).$$ We also consider $I_{\cA}$ as an object of the category $\text{Mod-}(\cA\otimes\cA^{op})^{op},$ via the obvious equivalence $(\cA\otimes\cA^{op})^{op}\cong \cA^{op}\otimes\cA\cong\cA\otimes\cA^{op}.$

\begin{prop}\label{prop:DG_localizations} 1) Let $\cA$ be a small DG category, and $\cB\subset\cA$ --- small DG subcategory. We denote by $\pi:\cA\to\cA/\cB$ the projection DG functor. Then the functor $\bL\pi^*:\Perf(\cA)\to\Perf(\cA/\cB)$ is a localization up to direct summands. That is, the natural functor $\Perf(\cA)/\ker(\bL\pi^*)\to\im(\bL\pi^*)$ is an equivalence, and the Karoubi completion of the subcategory $\im(\bL\pi^*)\subset\Perf(\cA/\cB)$ coincides with $\Perf(\cA/\cB).$ Here we denote by $\im(\bL\pi^*)$ the essential image of the functor $\bL\pi^*.$

2) Suppose that a DG functor $F:\cA\to\cA'$ induces a localization up to direct summands $\bL F^*:\Perf(\cA)\to\Perf(\cA').$ Then the functor $\bL F^*:D(\cA)\to D(\cA')$ is a localization.

3) Suppose that a DG functor $F:\cA\to\cA'$ induces a localization $\bL F^*:D(\cA)\to D(\cA').$ Then we have an isomorphism
$$\bL(F\otimes F^{op})^*(I_{\cA})\cong I_{\cA'}$$
in $D(\cA'\otimes\cA'^{op}).$\end{prop}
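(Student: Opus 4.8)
The plan is to exploit the fact that a localization $\bL F^*:D(\cA)\to D(\cA')$ has a fully faithful right adjoint $F_*:D(\cA')\to D(\cA)$, together with the standard description of the diagonal bimodule as an internal Hom / identity functor. First I would recall that for a small DG category $\cA$, the bimodule $I_{\cA}\in D(\cA\otimes\cA^{op})$ represents the identity functor under the equivalence $D(\cA\otimes\cA^{op})\simeq \mathrm{RHom}_{\mathrm{cont}}(D(\cA),D(\cA))$ (or, more concretely, $M\mapsto M\stackrel{\bL}{\otimes}_{\cA} I_{\cA}\cong M$). Under this dictionary, the bimodule $\bL(F\otimes F^{op})^*(I_{\cA})$ corresponds to the composite functor $\bL F^*\circ F_*:D(\cA')\to D(\cA')$: indeed, extension of scalars along $F\otimes F^{op}$ on bimodules is intertwined with pre- and post-composition by $\bL F^*$ and its adjoint $F_*$ on the functor side, a statement that can be checked on free bimodules $h_X\otimes h_Y^{op}$ and then extended by continuity. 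Since $\bL F^*$ is a localization, its right adjoint $F_*$ is fully faithful, hence the counit $\bL F^*\circ F_*\to \id_{D(\cA')}$ is an isomorphism of functors. Translating back through the equivalence, this gives the desired isomorphism $\bL(F\otimes F^{op})^*(I_{\cA})\cong I_{\cA'}$ in $D(\cA'\otimes\cA'^{op})$.

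In carrying this out, the key steps in order are: (i) fix the precise equivalence between $D(\cA\otimes\cA^{op})$ and continuous functors $D(\cA)\to D(\cA)$, sending $I_{\cA}$ to $\id$; (ii) verify the compatibility $\bL(F\otimes F^{op})^*(-)$ on bimodules corresponds to $\bL F^*\circ(-)\circ F_*$ on functors — this is the computational heart, done by reducing to representable bimodules where everything is explicit and then passing to colimits; (iii) invoke that a localization of triangulated categories (in the Verdier sense, as in part 2) admits a fully faithful right adjoint, so that the counit is an isomorphism; (iv) conclude. Alternatively, one can give a more hands-on argument: $\bL(F\otimes F^{op})^*(I_{\cA})$ is computed as a bar-type resolution, and one checks directly that evaluating the resulting bimodule on a pair of objects of $\cA'$ recovers $\cA'(X,Y)$ up to quasi-isomorphism, using that $F$ induces an isomorphism $\cA'(FX',FY')$-style identifications after localizing; but the functorial argument is cleaner.

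The main obstacle I expect is step (ii): making precise and correct the intertwining between extension of scalars on bimodules and the two-sided composition of functors, including getting the variances right (note that $I_{\cA}$ is a right $\cA\otimes\cA^{op}$-module, and the left-right bookkeeping for $F^{op}$ must be handled carefully, as the paper itself flags with the equivalence $(\cA\otimes\cA^{op})^{op}\cong\cA\otimes\cA^{op}$). One must also ensure the relevant functors are continuous (commute with arbitrary coproducts) so that checking on representables suffices — this holds because $\bL F^*$, $F_*$, and $\bL(F\otimes F^{op})^*$ are all left adjoints or, in the case of $F_*$, preserves coproducts since $F^*$ sends perfect objects to perfect objects when... more carefully, $F_*$ has a left adjoint $\bL F^*$ hence preserves limits, but we need it to preserve colimits; this is automatic here because $\bL F^*$ is a localization so $F_*$ is in fact also a left adjoint onto its image. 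Once these technical points are pinned down, the rest is formal.
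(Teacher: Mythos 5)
Your proposal addresses only part 3) of the proposition; parts 1) and 2) are not touched. In the paper all three parts are handled by citation (part 1 to Drinfeld's theorem on DG quotients, parts 2 and 3 to [E]), so for part 3) your argument is a genuine proof where the paper merely refers out, but the omission of 1) and 2) is a real gap: neither the statement that $\bL\pi^*:\Perf(\cA)\to\Perf(\cA/\cB)$ is a localization up to summands, nor the passage from a localization up to summands on $\Perf$ to a localization on all of $D(-)$, follows from your part-3 argument, and each needs its own (nontrivial) input.

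For part 3) itself, your route — identify $\bL(F\otimes F^{op})^*(I_{\cA})$ with the composite $\bL F^*\circ F_*$ under the bimodule/functor dictionary, then use that a localization has a fully faithful right adjoint so the counit is an isomorphism — is correct and is essentially the argument behind [E, Prop.~3.5]. The one point you should make sharper is the passage from ``isomorphic induced functors'' back to ``isomorphic bimodules'': at the triangulated level the functor $D(\cA'\otimes\cA'^{op})\to\Fun(D(\cA'),D(\cA'))$ is not faithful, so you cannot conclude from an abstract isomorphism of functors alone. The clean fix is to work with the explicit candidate morphism of bimodules, namely the map $\bL(F\otimes F^{op})^*I_{\cA}\to I_{\cA'}$ adjoint to $I_{\cA}\to (F\otimes F^{op})_*I_{\cA'}$ (given by $F$ on Hom-complexes), and to check it is a quasi-isomorphism objectwise: evaluating at $(Z',W')\in\cA'\otimes\cA'^{op}$ one computes
$\bL(F\otimes F^{op})^*(I_{\cA})(Z',W')\cong \bigl(\bL F^*F_*\,h_{W'}\bigr)(Z')$,
under which the map above becomes the counit $\bL F^*F_*h_{W'}\to h_{W'}$, an isomorphism precisely because $F_*$ is fully faithful. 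This also disposes of your worries about continuity of $F_*$: restriction of scalars always preserves arbitrary direct sums, and in any case the reduction to representables only uses that both sides commute with colimits in the bimodule variable.
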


\begin{proof}1) follows from \cite{Dr}, Theorem 1.6.2. 

2) follows from \cite{E}, Proposition 3.7.

3) follows from \cite{E}, Proposition 3.5.
\end{proof}

Recall the notions of smoothness and compactness for DG categories.

\begin{defi}\label{defi:smooth_proper} A small DG category $\cA$ is called

1) smooth if $I_{\cA}\in\Perf(\cA\otimes\cA^{op});$

2) compact if for any two objects $X,Y\in Ob(\cA)$ we have $\cA(X,Y)\in\Perf(k).$\end{defi}

Recall the notion of (semi-orthogonal) gluing of DG categories.

\begin{defi}\label{defi:gluing} Let $\cA$ and $\cB$ be small DG categories, and $M\in\text{Mod-}(\cA\otimes\cB^{op})$ a DG bimodule. The gluing of $\cA$ and $\cB$  via the bimodule $M$ is a DG category which is denoted by $\cA\sqcup_{M}\cB,$ and which is defined as follows.

1) The objects are defined by the equality $Ob(\cA\sqcup_{M}\cB)=Ob(\cA)\sqcup Ob(\cB);$

2) The morphisms are defined by the formula
$$(\cA\sqcup_{M}\cB)(X,Y)=\begin{cases}\cA(X,Y) & \text{if }X,Y\in Ob(\cA);\\
                           \cB(X,Y) & \text{if }X,Y\in Ob(\cB);\\
                           M(X,Y) & \text{if }X\in Ob(\cA),\,Y\in Ob(\cB);\\
                           0 & \text{if }X\in Ob(\cB),\,Y\in Ob(\cA).
                          \end{cases}$$
                          
3) The composition in $\cA\sqcup_M\cB$ comes from the composition in $\cA$ and $\cB,$ and from the structure of $\cA\otimes\cB^{op}$-module on $M.$
\end{defi}

We will need the following facts about homotopically finite DG categories.

\begin{prop}\label{prop:properties_of_hfp} 1) Let $\cA$ and $\cB$ be small DG categories-which are Morita equivalent. If $\cA$ is homotopically finite, then $\cB$ is also homotopically finite.

2) Let $\cA$ be a smooth and compact DG category. Then $\cA$ is homotopically finite.

3) Let $\cA$ be a homotopically finite DG category. Then $\cA$ is smooth.

4) Let $\cA$ be a homotopically finite DG category, and $E\in Ob(\cA)$ an object. Then the DG quotient $\cA/E$ is also homotopically finite.

5) Let $\cA$ and $\cB$ be homotopically finite DG categories, and $M\in\Perf(\cA\otimes\cB^{op})$ a perfect bimodule. Then the gluing $\cA\sqcup_M\cB$ is also homotopically finite.\end{prop}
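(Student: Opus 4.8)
The plan is to establish each of the five statements of Proposition~\ref{prop:properties_of_hfp} by reducing to the definition of homotopically finite DG category from \cite{TV}, namely that $\cA$ is homotopically finite if it is Morita equivalent to a DG category whose diagonal bimodule (or, equivalently for a one-object category, the algebra itself) is a compact object in the appropriate derived category of bimodules, built out of the free bimodule by a finite sequence of shifts, cones, and direct summands. Part 1) is essentially immediate: homotopical finiteness is by construction a Morita-invariant notion, since it is defined in \cite{TV} in terms of the Morita homotopy category, so if $\cA$ is homotopically finite and $\cB$ is Morita equivalent to $\cA$, then $\cB$ represents the same object of $\Ho_M(\dgcat_k)$ and is therefore homotopically finite as well. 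Part 2) is the statement that smooth and compact implies homotopically finite; here one invokes the characterization (also from \cite{TV}) that a DG category is homotopically finite if and only if it is smooth and the diagonal bimodule is compact as a bimodule together with a finiteness of the $\Hom$-complexes, so that smoothness ($I_{\cA}\in\Perf(\cA\otimes\cA^{op})$) plus compactness ($\cA(X,Y)\in\Perf(k)$) gives exactly what is required.

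For part 3), that homotopically finite implies smooth: this is built into the definition or follows directly from it, since a homotopically finite DG category has its diagonal bimodule obtained from free bimodules by finitely many cones, shifts and summands, hence $I_{\cA}\in\Perf(\cA\otimes\cA^{op})$, which is precisely smoothness. Part 4) concerns stability under DG quotient by a single object $E\in Ob(\cA)$. The plan here is to use that $\cA/E$ is Morita equivalent to the gluing construction, or more directly to exhibit $\cA/E$ as the result of a finite homotopy-colimit-type operation on $\cA$: the localization $\Perf(\cA)\to\Perf(\cA/E)$ (Proposition~\ref{prop:DG_localizations}) and the fact that killing one object is a homotopically finite operation. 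One clean route is to note that $\cA/E$ fits into a Morita-equivalence with a pushout of homotopically finite DG categories along homotopically finite functors, and homotopical finiteness is stable under such finite homotopy pushouts in $\Ho_M(\dgcat_k)$ by \cite{TV}; alternatively, one writes down explicitly the semifree bimodule resolution of $I_{\cA/E}$ obtained from that of $I_{\cA}$ by adjoining the cone on the projection of $E$. Part 5), stability under gluing along a perfect bimodule $M\in\Perf(\cA\otimes\cB^{op})$, is the analogous statement for the two-object-type construction: one shows $\cA\sqcup_M\cB$ sits in a homotopy pushout diagram built from $\cA$, $\cB$ and the free DG category on one morphism, with gluing data encoded by the perfect bimodule $M$, and again invokes closure of homotopically finite objects under finite homotopy colimits, using that $M$ being perfect supplies the needed finiteness of the off-diagonal block of the diagonal bimodule of $\cA\sqcup_M\cB$.

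The main obstacle, I expect, will be part 4) (and by extension part 5), which is structurally similar): one must be careful that the DG quotient $\cA/E$ — which a priori is defined by formally adjoining a contracting homotopy for $E$, a process that enlarges the $\Hom$-complexes — still has a diagonal bimodule that is perfect over $\cA/E\otimes(\cA/E)^{op}$ and is built by finitely many operations. The subtlety is that homotopical finiteness requires a \emph{finite} construction, and the DG quotient is manifestly an infinite construction at the chain level; the resolution is to pass to a Morita-equivalent model (using Proposition~\ref{prop:DG_localizations}, part 1), which says $\bL\pi^*:\Perf(\cA)\to\Perf(\cA/E)$ is a localization up to summands) and to recognize that quotienting by the thick subcategory generated by one object corresponds, on the level of bimodules, to a single cone, so that the induced bimodule $I_{\cA/E}$ is obtained from $I_{\cA}$ (pulled back and pushed forward appropriately) by one more cone and taking a summand. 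Checking this carefully, and likewise tracking the perfect bimodule $M$ through the gluing in part 5), is where the real work lies; the other three parts are formal consequences of the definitions.
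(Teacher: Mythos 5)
The paper does not actually prove this proposition; it cites \cite{TV} (Corollaries 2.12, 2.13, Proposition 2.14) for parts 1)--3) and \cite{E} (Propositions 2.9, 4.9) for parts 4)--5), so your proposal must be measured against those references. Your treatment of parts 1)--3) is broadly in the spirit of \cite{TV}, with two caveats. In part 2) you assert an ``if and only if'' characterization (homotopically finite $\Leftrightarrow$ smooth plus finiteness of Hom-complexes); this is false as a biconditional --- \cite{TV} proves only that smooth and compact implies homotopically finite, and a homotopically finite category need not be compact (e.g.\ $k[x]$ is of finite type but infinite-dimensional over $k$). You only use the correct direction, so part 2) survives, but the claimed equivalence should go. In part 3), the implication is not ``built into the definition'': homotopical finiteness means being a retract in $\Ho_M(\dgcat_k)$ of a finite cell DG category, and extracting perfection of the diagonal bimodule from that requires the induction on cells carried out in \cite{TV}, Proposition 2.14.

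The genuine gap is in your strategy for parts 4) and 5). You propose to show that $I_{\cA/E}$ (resp.\ the diagonal bimodule of $\cA\sqcup_M\cB$) is built from $I_{\cA}$ by finitely many cones and summands. Even carried out in full, that argument establishes only that $\cA/E$ is \emph{smooth}. By part 3) smoothness is a consequence of homotopical finiteness and is strictly weaker: homotopical finiteness is a property of $\cA/E$ as an object of $\Ho_M(\dgcat_k)$ (being a retract of a finite cell object), not a property of its diagonal bimodule, so your plan proves the wrong statement. The route actually taken in \cite{E} works with cell presentations directly: the Drinfeld quotient by a single object $E$ is obtained by freely adjoining one endomorphism $\varepsilon$ of $E$ of degree $-1$ with $d\varepsilon=\id_E$, i.e.\ it is a \emph{single} cell attachment, and homotopically finite categories are closed under finite cell attachments and retracts; for part 5) one replaces the perfect bimodule $M$ by a finite semi-free model (up to retract) and realizes $\cA\sqcup_M\cB$ as obtained from the disjoint union $\cA\sqcup\cB$ by finitely many cell attachments. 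Your closing remark that ``the DG quotient is manifestly an infinite construction at the chain level'' inverts the actual situation: Drinfeld's quotient by one object is a one-cell free construction, and that is precisely why part 4) holds.
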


\begin{proof}The statements 1)-3) are proved in the paper \cite{TV}, Corollary 2.12, Corollary 2.13, Proposition 2.14. The statements 4)-5) are proved in the paper \cite{E}, Proposition 2.9, Proposition 4.9.\end{proof}

\section{Mixed Hochschild complex}
\label{sec:HH_mixed}

We identify homological and cohomological complexes in the standard way: if $V_{\bullet}$ is a homological complex, then the corresponding cohomological complex is given by $V^n=V_{-n}.$ The same identification takes place for graded vector spaces. The shift functor $[n]$ is always cohomological: $V[n]^m=V^{n+m}.$

Let $\cA$ be a small DG category. Its Hochschild homology is defined by the formula
$$HH_n(\cA):=H^{-n}(I_{\cA}\stackrel{\bL}{\otimes}_{\cA\otimes\cA^{op}}I_{\cA}).$$

It follows directly from the definition that Hochschild homology is multiplicative (K\"unneth formula, see \cite{L}, Section 4.3):
$$HH_{\bullet}(\cA\otimes\cB)\cong HH_{\bullet}(\cA)\otimes HH_{\bullet}(\cB).$$

Bar resolution of the diagonal bimodule gives the Hochschild chain complex which is denoted by $C_{\bullet}(\cA).$
 As a graded vector space, this complex is defined by the equality
$$C_{\bullet}(A)=\bigoplus\limits_{\substack{n\geq 0;\\X_0,\dots,X_n\in Ob(\cA)}}\cA(X_n,X_0)\otimes \cA(X_{n-1},X_n)[1]\otimes\dots\otimes \cA(X_0,X_1)[1].$$
The differential is the sum of $2$ components: $b=b_{\delta}+b_{\mu},$ where
$$b_{\delta}(a_n\otimes a_{n-1}\otimes\dots\otimes a_0)=\sum\limits_{i=0}^{n}\pm a_n\otimes a_{n-1}\otimes\dots d(a_i)\otimes\dots\otimes a_0,$$
$$b_{\mu}(a_n\otimes a_{n-1}\otimes\dots\otimes a_0)=\pm a_0a_n\otimes a_{n-1}\otimes\dots\otimes a_1+\sum\limits_{i=0}^{n-1}a_n\otimes\dots\otimes a_{i+1}a_i\otimes\dots\otimes a_0.$$

It will be convenient for us to use the reduced Hochschild complex, which we denote by $C_{\bullet}^{red}(\cA).$ As a graded vector space, this complex is given by equality
$$C_{\bullet}^{red}(A)=\bigoplus\limits_{\substack{n\geq 0;\\X_0,\dots,X_n\in Ob(\cA)}}\cA(X_n,X_0)\otimes\bar{\cA}(X_{n-1},X_n)[1]\otimes\dots\otimes \bar{\cA}(X_0,X_1)[1].$$
Here $$\bar{\cA}(X,Y)=\begin{cases}\cA(X,X)/k\cdot\id_X & \text{if }X=Y;\\
\cA(X,Y) & \text{otherwise.}
\end{cases}$$
It is easy to see that the differential $b$ on $C_{\bullet}(\cA)$ induces a well defined differential on $C_{\bullet}^{red}(\cA),$ which we also denote by $b.$ Moreover, the projection $C_{\bullet}(\cA)\to C_{\bullet}^{red}(\cA)$ is a quasi-isomorphism.

\begin{defi}A mixed complex is a triple $(K_{\bullet},b,B),$ where $K_{\bullet}$ is a graded vector space, $b:K_{\bullet}\to K_{\bullet}$ is a differential of homological degree $-1,$ $B:K_{\bullet}\to K_{\bullet}$ is a differential of homological degree $1,$ such that $bB+Bb=0.$\end{defi}

In other words, a mixed complex is a DG module over the DG algebra $k[B]/(B^2),$ where $\deg(B)=-1,$ $d(B)=0.$ A morphism of mixed complexes is said to be a quasi-isomorphism if it is a quasi-isomorphism of DG modules over $k[B]/(B^2).$

The complex $(C_{\bullet}^{red}(\cA),b)$ is equipped by an additional Connes-Tsygan differential \cite{Co}, \cite{T}, \cite{FT}. This differential is denoted by $B$ and is given by equality
$$B(a_n\otimes\dots\otimes a_0)=\sum\limits_{i=0}^n\pm \id_{X_{i+1}}\otimes a_i\otimes \otimes\dots\otimes a_0\otimes a_n\otimes\dots\otimes a_{i+1},$$
where $a_i\in\cA(X_i,X_{i+1}),$ $0\leq i\leq n,$ and we put $X_{n+1}:=X_0$ for convenience.

It is easy to check that $B^2=0$ and $bB+Bb=0.$ Hence, we have a mixed complex $(C_{\bullet}^{red}(\cA),b,B).$

Any DG functor $F:\cA\to\cB$ between small DG categories induces a morphism of mixed Hochschild complexes which we denote by
$$F^*:C_{\bullet}^{red}(\cA)\to C_{\bullet}^{red}(\cB).$$
We also denote by $F^*$ the induced map on Hochschild homology and negative cyclic homology (see definition below).

\begin{theo}(\cite{Ke2}) If a DG functor $F:\cA\to \cB$ is a Morita equivalence, then the induced map
$F^*:C_{\bullet}^{red}(\cA)\to C_{\bullet}^{red}(\cB)$ is a quasi-isomorphism of mixed complexes.\end{theo}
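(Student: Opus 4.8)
The plan is to reduce the statement, via the very definition of quasi-isomorphism of mixed complexes, to the Morita invariance of Hochschild homology, and then to deduce the latter from Proposition \ref{prop:DG_localizations}, part 3). The first observation is that a mixed complex is by definition a DG module over the DG algebra $\Lambda:=k[B]/(B^2)$, whose underlying complex carries only the differential $b$ (the variable $B$ acts through the module structure, not the differential). Consequently a morphism of mixed complexes is a quasi-isomorphism of DG $\Lambda$-modules precisely when the underlying map of $b$-complexes is a quasi-isomorphism. Thus it suffices to prove that the map $F^*:C_{\bullet}^{red}(\cA)\to C_{\bullet}^{red}(\cB)$ induces an isomorphism $HH_{\bullet}(\cA)\to HH_{\bullet}(\cB)$; the compatibility of $F^*$ with the Connes--Tsygan differential $B$ is irrelevant for this implication, being needed only to make sense of $F^*$ as a morphism of mixed complexes in the first place.

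Next I would identify the explicit complex with the conceptual invariant. The reduced Hochschild complex $C_{\bullet}^{red}(\cA)$, being quasi-isomorphic to $C_{\bullet}(\cA)$ which arises from the bar resolution of the diagonal bimodule, computes $I_{\cA}\stackrel{\bL}{\otimes}_{\cA\otimes\cA^{op}}I_{\cA}$, so that $H^{-n}$ of it is $HH_n(\cA)$; under this identification the chain map $F^*$ corresponds to the functoriality map induced by the DG functor $F^{e}:=F\otimes F^{op}:\cA\otimes\cA^{op}\to\cB\otimes\cB^{op}$, i.e. the base-change map associated to extension of scalars along $F^{e}$. Establishing this matching — that the bar complex is a functorial semifree resolution of the diagonal bimodule and that the induced map on derived tensor products is precisely the explicit $F^*$ — is the step requiring the most care, although it is ultimately standard homological bookkeeping.

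It then remains to show that $F^{e}$ induces an isomorphism on the derived self-tensor product of the diagonal bimodule. Since $F$ is a Morita equivalence, so is $F^{e}$ (a tensor product of Morita equivalences is a Morita equivalence), hence $\bL(F^{e})^{*}:D(\cA\otimes\cA^{op})\to D(\cB\otimes\cB^{op})$ is an equivalence. An equivalence is in particular a localization, so Proposition \ref{prop:DG_localizations}, part 3), applies and gives $\bL(F^{e})^{*}(I_{\cA})\cong I_{\cB}$; applying the same statement to $F^{op}$, together with the flip $(\cA\otimes\cA^{op})^{op}\cong\cA\otimes\cA^{op}$ and $I_{\cA^{op}}\cong I_{\cA}$, accounts for the second module structure. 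Finally, for any Morita equivalence $G:\cR\to\cS$ there is a base-change isomorphism $M\stackrel{\bL}{\otimes}_{\cR}N\cong \bL G^{*}(M)\stackrel{\bL}{\otimes}_{\cS}\bL(G^{op})^{*}(N)$, obtained by writing $\bL G^{*}$ and $\bL(G^{op})^{*}$ through the mutually inverse bimodules associated with $G$ and using that the diagonal bimodule is the tensor unit. Taking $G=F^{e}$ and $M=N=I_{\cA}$ yields $HH_{\bullet}(\cA)\cong HH_{\bullet}(\cB)$, and tracing through the identifications shows that this isomorphism is exactly the one induced by $F^{*}$.

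The main obstacle is therefore not the Morita invariance of Hochschild homology itself, which follows cleanly from Proposition \ref{prop:DG_localizations}, but rather the faithful translation between the explicit chain-level map $F^{*}$ and the conceptual functoriality of the derived tensor product, together with the careful handling of the left/right module structures and of the flip used in the base-change formula. Once that translation is in place, the reduction in the first paragraph makes the passage from ordinary to mixed quasi-isomorphism automatic.
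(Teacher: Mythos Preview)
The paper does not supply a proof of this statement: it is quoted as a theorem of Keller \cite{Ke2} and left without argument. There is therefore no proof in the paper to compare your proposal against, but your sketch can be assessed on its own.

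Your reduction is correct: by the definition recorded in the paper, a quasi-isomorphism of mixed complexes is nothing more than a quasi-isomorphism of the underlying $b$-complexes, so the statement collapses to Morita invariance of Hochschild homology. Your derivation of the latter from Proposition~\ref{prop:DG_localizations}, part 3), together with the base-change isomorphism
\[
M\stackrel{\bL}{\otimes}_{\cR}N\;\cong\;\bL G^{*}(M)\stackrel{\bL}{\otimes}_{\cS}\bL(G^{op})^{*}(N)
\]
for a Morita equivalence $G$, is sound; that isomorphism follows from the projection-formula identity $\bL G^{*}(M)\stackrel{\bL}{\otimes}_{\cS}N'\cong M\stackrel{\bL}{\otimes}_{\cR}(G^{op})_{*}N'$ combined with the fact that the unit $N\to (G^{op})_{*}\bL(G^{op})^{*}N$ is an isomorphism when $G^{op}$ is a Morita equivalence. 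One cosmetic slip: to obtain $\bL(F^{e})^{*}(I_{\cA})\cong I_{\cB}$ you must apply Proposition~\ref{prop:DG_localizations}, part 3), to $F$ (a Morita equivalence is a localization), not to $F^{e}$; applying it to $F^{e}$ would instead yield a statement about $I_{\cA\otimes\cA^{op}}$. A more substantive caveat is possible circularity: Proposition~\ref{prop:DG_localizations}, part 3), is imported from \cite{E}, and you should verify that its proof there does not already invoke Keller's invariance theorem for the Hochschild (or mixed) complex. If that dependency is absent, your argument gives a clean internal route to the result.
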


We will need the following observation.

\begin{lemma}\label{lem:additivity_of_Hochschild} Let $\cA,$ $\cB$ and $M$ be as in Definition \ref{defi:gluing}. Then we have a natural isomorphism of mixed complexes
\begin{equation}\label{eq:additivity_of_Hochschild} C_{\bullet}^{red}(\cA\sqcup_M\cB)\cong C_{\bullet}^{red}(\cA)\oplus C_{\bullet}^{red}(\cB).\end{equation}\end{lemma}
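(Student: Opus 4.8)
The plan is to exploit the very explicit combinatorial description of the reduced Hochschild complex together with the special shape of morphisms in a gluing. Recall that in $\cA\sqcup_M\cB$ there are no nonzero morphisms from an object of $\cB$ to an object of $\cA$. A generator of $C_{\bullet}^{red}(\cA\sqcup_M\cB)$ is a chain $a_n\otimes a_{n-1}\otimes\dots\otimes a_0$ with $a_i\in(\cA\sqcup_M\cB)(X_i,X_{i+1})$ (indices cyclic, $X_{n+1}=X_0$), and for this to be nonzero each consecutive pair of objects must admit a nonzero morphism space. Going around the cycle $X_0\to X_1\to\dots\to X_n\to X_0$, once we pass from the $\cB$-part to the $\cA$-part we get a zero morphism space; hence a nonzero generator must have \emph{all} objects $X_0,\dots,X_n$ in $Ob(\cA)$, or \emph{all} of them in $Ob(\cB)$. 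This gives a decomposition of the underlying graded vector space $C_{\bullet}^{red}(\cA\sqcup_M\cB)=C_{\bullet}^{red}(\cA)\oplus C_{\bullet}^{red}(\cB)$, where the two summands are spanned by the all-$\cA$ and all-$\cB$ generators respectively (the reduction is compatible since $\bar{}$ only modifies endomorphism spaces within a single object).

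Next I would check that this splitting is respected by both differentials $b$ and $B$. For $b_\delta$ this is immediate since it only applies the internal differential $d$ to one factor and does not change objects. For $b_\mu$, each term either contracts $a_{i+1}a_i$ (a composition $X_i\to X_{i+1}\to X_{i+2}$, which still only uses objects $X_0,\dots,X_n$) or forms the cyclic term $a_0a_n$; in all cases the set of objects appearing does not grow, so an all-$\cA$ chain maps to a sum of all-$\cA$ chains, and likewise for $\cB$. For $B$, the Connes–Tsygan operator inserts identity morphisms $\id_{X_{i+1}}$ and cyclically permutes, again using only the objects already present; so it too preserves the decomposition. Thus $(C_{\bullet}^{red}(\cA\sqcup_M\cB),b,B)$ splits as a direct sum of mixed complexes, and on each summand the differentials are literally the same formulas as on $C_{\bullet}^{red}(\cA)$ and $C_{\bullet}^{red}(\cB)$, because the morphism spaces, compositions, and identities involved are exactly those of $\cA$ (resp. $\cB$) — the bimodule $M$ never enters a nonzero all-$\cA$ or all-$\cB$ chain. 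This identifies the two summands with $C_{\bullet}^{red}(\cA)$ and $C_{\bullet}^{red}(\cB)$ as mixed complexes, and naturality in $\cA$, $\cB$ (and $M$) is clear from the construction.

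The only genuinely delicate point is bookkeeping with the cyclic term $a_0 a_n$ in $b_\mu$ together with the reduction $\bar{\cA}$: one must confirm that the induced differentials on the reduced quotient still close up within each summand and that dropping the identity components does not mix $\cA$ with $\cB$. This is where I would be most careful, but since reduction is performed object-by-object (only $\cA(X,X)\to\cA(X,X)/k\cdot\id_X$) it cannot create or destroy the object-cycle constraint, so no obstruction actually arises. Hence the lemma follows essentially by inspection of Definitions \ref{defi:gluing} and of the formulas for $b$ and $B$ in Section \ref{sec:HH_mixed}.
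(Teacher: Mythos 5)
Your argument is correct and is essentially the paper's own proof: the key observation in both is that $(\cA\sqcup_M\cB)(X,Y)=0$ for $X\in Ob(\cB)$, $Y\in Ob(\cA)$, so a nonzero cyclic chain must have all its objects in $\cA$ or all in $\cB$. The paper states only this and leaves the compatibility of $b$ and $B$ with the splitting implicit; your extra verification of that point is sound but not a different route.
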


\begin{proof}Indeed, let us put $\cC:=\cA\sqcup_M\cB,$ and consider a sequence of objects $X_0,\dots,X_n\in~Ob(\cC),$ such that
$$\cC(X_n,X_0)\otimes\bar{\cC}(X_{n-1},X_n)\otimes\dots\otimes\bar{\cC}(X_0,X_1)\ne 0.$$
Note that $\cC(X,Y)=0$ for $X\in Ob(\cB),$ $Y\in Ob(\cA).$ It follows that either $X_0,\dots,X_n\in Ob(\cA),$ or $X_0,\dots,X_n\in Ob(\cB).$ This implies the isomorphism \eqref{eq:additivity_of_Hochschild}.\end{proof}

From now on, we denote by $u$ a formal variable of (cohomological) degree $2.$ For any graded vector space $K_{\bullet}$ we can construct a graded $k[u]$-module $$K_{\bullet}[[u]]:=\prod\limits_{n\geq 0}K_{\bullet}[-2n].$$
For any homogeneous endomorphism of the space $K_{\bullet}$ we denote by the same symbol the corresponding $k[u]$-linear homogeneous endomorphism of $K_{\bullet}[[u]].$

\begin{defi}\label{defi:cyclic_complexes} Let $\cA$ be a small DG category. 

1) The negative cyclic complex of $\cA$ is defined by the formula
$$CC_{\bullet}^{-,red}(\cA):=(C_{\bullet}^{red}(\cA)[[u]],b+uB).$$
Its homology is called negative cyclic homology, and is denoted by $HC^-_{\bullet}(\cA).$

2) The cyclic complex $\cA$ is defined by the formula $$CC_{\bullet}^{red}(\cA):=(C_{\bullet}^{red}(\cA)\otimes_k (k[u^{\pm 1}]/uk[u]),b+uB).$$
Its homology is called cyclic homology, and is denoted by $HC_{\bullet}(\cA).$\end{defi}

It follows directly from the definition that both $HC^{-}_{\bullet}(\cA)$ and $HC_{\bullet}(\cA)$ are $k[u]$-modules.

By the definition of negative cyclic complex, we have a short exact sequence of complexes
\begin{equation}
\label{eq:short_exact_negative_cyclic}
0\to CC^{-,red}_{\bullet}(\cA)[-2]\stackrel{u}{\to} CC^{-,red}_{\bullet}(\cA)\to C_{\bullet}^{red}(\cA)\to 0.\end{equation}
It gives a long exact sequence in homology, which is of the form 
\begin{equation}\label{eq:long_exact_HC^-}\dots\to HC^-_{n+2}(\cA)\stackrel{u}{\to} HC^-_n(\cA)\to HH_n(\cA)\stackrel{\delta}{\to} HC^-_{n+1}(\cA)\to\dots.\end{equation}
From now on we denote by $\delta$ the boundary map from \eqref{eq:long_exact_HC^-}.

\begin{lemma}\label{lem:obvious_commutative_diagrams} 1) Let $F:\cA\to \cB$ be a DG functor between small DG categories. Then we have commutative diagrams
\begin{equation}
\label{eq:comm_diagram_for_ch}
\begin{CD}
K_n(\cA) @> F^* >> K_n(\cB)\\
@V ch VV @V ch VV\\
HH_n(\cA) @> F^* >> HH_n(\cB),
\end{CD}
\end{equation}
and
\begin{equation}
\label{eq:comm_diagram_for_delta}
\begin{CD}
HH_n(\cA) @> F^* >> HH_n(\cB)\\
@V \delta VV @V \delta VV\\
HC^-_{n+1}(\cA) @> F^* >> HC^-_{n+1}(\cB).
\end{CD}
\end{equation}

2) Recall that for any small DG categories $\cC$ and $\cD$ we denote by $\varphi_n$ the composition 
$$\varphi_n=(\id\otimes\delta)\circ ch:K_n(\cC\otimes\cD)\to (HH_{\bullet}(\cC)\otimes HC^-_{\bullet}(\cD))_{n+1}.$$
Let $F_1:\cA_1\to\cB_1,$ $F_2:\cA_2\to\cB_2$ be DG functors between small DG categories. Then we have a commutative diagram
\begin{equation}
\label{eq:comm_diagram_for_varphi_n}
\begin{CD}
K_n(\cA_1\otimes\cA_2) @> (F_1\otimes F_2)^* >> K_n(\cB_1\otimes\cB_2)\\
@V \varphi_n VV @V \varphi_n VV\\
(HH_{\bullet}(\cA_1)\otimes HC^-_{\bullet}(\cA_2))_{n+1} @> F_1^*\otimes F_2^* >>   (HH_{\bullet}(\cB_1)\otimes HC^-_{\bullet}(\cB_2))_{n+1}.
\end{CD}
\end{equation}
\end{lemma}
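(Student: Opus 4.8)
The plan is to reduce everything to the functoriality of the four basic invariants involved — Waldhausen $K$-theory, the Chern character, the mixed Hochschild complex, and the boundary map $\delta$ — and then assemble diagram \eqref{eq:comm_diagram_for_varphi_n} by pasting together the two squares of part 1) after tensoring. For part 1), square \eqref{eq:comm_diagram_for_ch} is the statement that the Chern character $ch\colon K_n\to HH_n$ is a natural transformation of functors on the category of small DG categories (with DG functors as morphisms); this is exactly the functoriality recalled from \cite{CT} just before Conjecture \ref{conj:generalized_degeneration_intro}, applied to the map $\Perf(\cA)\to\Perf(\cB)$ induced by $F$. For square \eqref{eq:comm_diagram_for_delta}, the point is that a DG functor $F\colon\cA\to\cB$ induces a morphism of mixed complexes $F^*\colon C_{\bullet}^{red}(\cA)\to C_{\bullet}^{red}(\cB)$, hence a morphism of the short exact sequences \eqref{eq:short_exact_negative_cyclic} for $\cA$ and for $\cB$ (the map $F^*[[u]]$ commutes with multiplication by $u$ and with $b+uB$ by construction). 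The induced ladder of long exact sequences \eqref{eq:long_exact_HC^-} then has, among its commuting squares, precisely \eqref{eq:comm_diagram_for_delta}: naturality of the connecting homomorphism in a map of short exact sequences of complexes is the standard homological-algebra fact.

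For part 2), I would first record that the Chern character is compatible with the K\"unneth decomposition of Hochschild homology, i.e. that for small DG categories $\cC,\cD$ the composite
$$K_n(\cC\otimes\cD)\stackrel{ch}{\to} HH_n(\cC\otimes\cD)\isomo (HH_{\bullet}(\cC)\otimes HH_{\bullet}(\cD))_n$$
is natural in the pair $(\cC,\cD)$. Granting this, the left and right edges of \eqref{eq:comm_diagram_for_varphi_n} are by definition $\varphi_n=(\id\otimes\delta)\circ ch$ for the pairs $(\cA_1,\cA_2)$ and $(\cB_1,\cB_2)$ respectively. Then the diagram decomposes vertically into two sub-rectangles: the top one,
$$\begin{CD}
K_n(\cA_1\otimes\cA_2) @> (F_1\otimes F_2)^* >> K_n(\cB_1\otimes\cB_2)\\
@V ch VV @V ch VV\\
(HH_{\bullet}(\cA_1)\otimes HH_{\bullet}(\cA_2))_n @> F_1^*\otimes F_2^* >> (HH_{\bullet}(\cB_1)\otimes HH_{\bullet}(\cB_2))_n,
\end{CD}$$
commutes by \eqref{eq:comm_diagram_for_ch} applied to $F_1\otimes F_2\colon\cA_1\otimes\cA_2\to\cB_1\otimes\cB_2$ together with the naturality of the K\"unneth isomorphism; the bottom one,
$$\begin{CD}
(HH_{\bullet}(\cA_1)\otimes HH_{\bullet}(\cA_2))_n @> F_1^*\otimes F_2^* >> (HH_{\bullet}(\cB_1)\otimes HH_{\bullet}(\cB_2))_n\\
@V \id\otimes\delta VV @V \id\otimes\delta VV\\
(HH_{\bullet}(\cA_1)\otimes HC^-_{\bullet}(\cA_2))_{n+1} @> F_1^*\otimes F_2^* >> (HH_{\bullet}(\cB_1)\otimes HC^-_{\bullet}(\cB_2))_{n+1},
\end{CD}$$
commutes because it is obtained by applying $F_1^*\otimes(-)$ to square \eqref{eq:comm_diagram_for_delta} for $F_2$ (equivalently, tensoring the commuting square for $F_2$ with the map $F_1^*$ on the first factor). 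Pasting the two rectangles gives \eqref{eq:comm_diagram_for_varphi_n}.

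I expect the only genuinely substantive point to be the naturality of the K\"unneth isomorphism $HH_{\bullet}(\cC\otimes\cD)\cong HH_{\bullet}(\cC)\otimes HH_{\bullet}(\cD)$ and its compatibility with the Chern character used implicitly in the very definition of $\varphi_n$. At chain level the K\"unneth quasi-isomorphism is realized by the shuffle (Eilenberg--Zilber) map $C_{\bullet}^{red}(\cC)\otimes C_{\bullet}^{red}(\cD)\to C_{\bullet}^{red}(\cC\otimes\cD)$, which is manifestly functorial in $\cC$ and $\cD$ separately, so this reduces to a direct inspection; the reference \cite{L}, Section 4.3 already supplies the required naturality. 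Everything else — functoriality of $ch$, functoriality of $F^*$ on mixed complexes, naturality of the connecting map — is cited or standard, so no step requires new input beyond bookkeeping of signs and degrees.
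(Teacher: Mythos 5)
Your proof is correct and follows essentially the same route as the paper: square \eqref{eq:comm_diagram_for_ch} by functoriality of the Chern character from \cite{CT}, square \eqref{eq:comm_diagram_for_delta} by naturality of the connecting map for the morphism of short exact sequences \eqref{eq:short_exact_negative_cyclic} induced by $F$, and \eqref{eq:comm_diagram_for_varphi_n} by pasting the two. Your explicit attention to the naturality of the K\"unneth isomorphism in the top sub-rectangle is a point the paper leaves implicit, but it is the same argument.
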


\begin{proof} Commutativity of the diagram \eqref{eq:comm_diagram_for_ch} is exactly the functoriality of Chern character \cite{CT}. Commutativity of the diagram \eqref{eq:comm_diagram_for_delta} follows from the fact that DG functor $F:\cA\to\cB$ induces a morphism of short exact sequences of complexes \eqref{eq:short_exact_negative_cyclic} for $\cA$ and $\cB.$ Finally, commutativity of the diagram \eqref{eq:comm_diagram_for_varphi_n} immediately follows from the commutativity of \eqref{eq:comm_diagram_for_ch} and \eqref{eq:comm_diagram_for_delta}.\end{proof}

\section{Generalized degeneration conjecture}
\label{sec:generalized_degeneration}

From now on we assume that the basic field $k$ is of zharacteristic zero.

The conjecture of Kontsevich and Soibelman, which was formulated in the introducton (Conjecture \ref{conj:Kontsevich_degeneration_intro}), admits the following reformulation. 

\begin{conj}\label{conj:Kontsevich_degeneration} Let $\cA$ be a smooth and compact DG category. Then the boundary map $\delta:~HH_{\bullet}(\cA)\to HC^-_{\bullet+1}(\cA)$ vanishes.\end{conj}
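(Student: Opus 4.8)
The plan is to show that Conjecture \ref{conj:Kontsevich_degeneration} (equivalently \ref{conj:Kontsevich_degeneration_intro}) is exactly the special case $\cB=k$ of Conjecture \ref{conj:generalized_degeneration_intro}, reading off the statement via the multiplicative structures. First I would record the Künneth isomorphism $HH_\bullet(k\otimes\cC)\cong HH_\bullet(k)\otimes HH_\bullet(\cC)\cong HH_\bullet(\cC)$ (since $HH_\bullet(k)=k$ concentrated in degree $0$) and the analogous identification $HC^-_\bullet(k\otimes\cC)\cong HC^-_\bullet(\cC)$, and likewise $K_\bullet(k\otimes\cC)\cong K_\bullet(\cC)$; under these identifications the map $\varphi_n$ for the pair $(k,\cC)$ becomes precisely $\delta\circ ch:K_n(\cC)\to HC^-_{n+1}(\cC)$. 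Then I would invoke the standard fact that for a smooth and compact DG category $\cA$ the Chern character $ch:K_0(\cA)\to HH_0(\cA)$ is surjective rationally --- more precisely, that $HH_\bullet(\cA)$ is spanned (over $k$, using the additive/localizing nature of these invariants) by Chern characters of objects of $\Perf(\cA)$, or at least that the image of $ch$ generates $HH_\bullet(\cA)$ as needed --- so that vanishing of $\delta\circ ch$ forces $\delta=0$ on all of $HH_\bullet(\cA)$.

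The one point that needs care is the surjectivity/generation statement for the Chern character. For a smooth and proper DG category one has the noncommutative Hodge-theoretic fact that $HH_\bullet(\cA)$ is finite-dimensional and the Chern character, after passing to $HH$ of $\cA$ and its shifts (or equivalently using the $S^1$-equivariant refinement), hits a generating set; a clean way to see it is that for $\cA$ smooth and compact the diagonal $I_\cA\in\Perf(\cA\otimes\cA^{op})$ and the evaluation/coevaluation maps identify $HH_\bullet(\cA)$ with a direct summand of something built from $\Hom$-complexes, and these classes are realized by Chern characters of perfect modules. Alternatively, since $\varphi_n$ is required to vanish for \emph{all} small $\cB$, I can take $\cB=\cA^{op}$ and use the class of $I_\cA\in K_0(\cA\otimes\cA^{op})$: its Chern character is the ``identity'' element whose image under the Künneth-dual pairing recovers all of $HH_\bullet(\cA)$, and $(\id\otimes\delta)(ch(I_\cA))=0$ then yields $\delta=0$ on $HH_\bullet(\cA)$. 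I expect the main obstacle to be precisely making this last reduction rigorous: controlling exactly which elements of $HH_\bullet(\cA)$ lie in the image of (combinations of) Chern characters, and checking that the K\"unneth identifications are compatible with $\delta$ and with $ch$ in the way Lemma \ref{lem:obvious_commutative_diagrams} and the functoriality statements permit.

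Concretely the steps, in order, are: (1) specialize $\varphi_n$ to the pair $(\cB,\cC)=(k,\cA)$ and simplify via K\"unneth to get $\varphi_n = \delta\circ ch : K_n(\cA)\to HC^-_{n+1}(\cA)$; (2) invoke Conjecture \ref{conj:generalized_degeneration_intro} to conclude $\delta\circ ch=0$ for $n\le 0$; (3) using smoothness and compactness of $\cA$, produce enough classes in $K_\bullet$ of $\cA$ (or of $\cA\otimes\cA^{op}$, via the class of the diagonal bimodule $I_\cA\in\Perf(\cA\otimes\cA^{op})$) whose Chern characters generate $HH_\bullet(\cA)$, so that step (2) upgrades to $\delta=0$ on all of $HH_\bullet(\cA)$; (4) conclude that the Hochschild-to-cyclic spectral sequence degenerates, i.e.\ Conjecture \ref{conj:Kontsevich_degeneration} holds, and hence Conjecture \ref{conj:Kontsevich_degeneration_intro} holds by the reformulation already noted. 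Since $n\le 0$ is assumed in Conjecture \ref{conj:generalized_degeneration_intro} while $\delta$ must be killed in every degree, the key use of the diagonal class is to reduce degree-$n$ statements for $\cA$ to degree-$0$ statements for $\cA\otimes\cA^{op}$, which is where the hypothesis is available.
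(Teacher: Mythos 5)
Your second route is exactly the paper's argument (Proposition \ref{prop:generalized_implies_Kontsevich}): apply Conjecture \ref{conj:generalized_degeneration_intro} with $\cB=\cA^{op}$, $\cC=\cA$ to the class $[I_{\cA}]\in K_0(\cA^{op}\otimes\cA)$, and the ``generation'' issue you flag as the main obstacle is resolved by Proposition \ref{prop:nondeg_copairing} (Shklyarov's non-degeneracy of the copairing): contraction against $ch(I_{\cA})\in HH_{\bullet}(\cA^{op})\otimes HH_{\bullet}(\cA)$ is an isomorphism $\beta:HH_{\bullet}(\cA^{op})^{\vee}\stackrel{\sim}{\to} HH_{\bullet}(\cA)$, so $(\id\otimes\delta)(ch(I_{\cA}))=0$ gives $\delta\circ\beta=0$ and hence $\delta=0$ in every degree, using only the $n=0$ case of the conjecture. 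You should drop the first route, though: for a smooth and compact $\cA$ the Chern character $K_0(\cA)\to HH_0(\cA)$ is in general not surjective, even rationally --- for $\cA=D^b_{coh}(X)$ its image meets $H^{1,1}$ only in the algebraic (N\'eron--Severi) classes --- so specializing to $\cB=k$ cannot by itself force $\delta=0$ on all of $HH_{\bullet}(\cA)$; the diagonal bimodule is what makes the argument work.
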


\begin{prop}\label{prop:equivalence_of_degeneration_conjectures} Conjecture \ref{conj:Kontsevich_degeneration} is equivalent to Conjecture \ref{conj:Kontsevich_degeneration_intro}.\end{prop}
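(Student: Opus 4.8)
The plan is to exhibit the standard dictionary between the degeneration of the Hochschild-to-cyclic spectral sequence and the vanishing of the boundary map $\delta$, using the fact that we are over a field of characteristic zero so that the relevant complexes are strict and the spectral sequence is governed entirely by $u$-divisibility. First I would recall that the periodic/cyclic complex $CC^{red}_{\bullet}(\cA)$ is obtained from the negative cyclic complex $CC^{-,red}_{\bullet}(\cA)$ by inverting $u$ and truncating, and that the $u$-adic filtration on $CC^{-,red}_{\bullet}(\cA) = (C_{\bullet}^{red}(\cA)[[u]], b+uB)$ is exactly the filtration whose associated spectral sequence is the Hochschild-to-cyclic spectral sequence with $E_1 = HH_{\bullet}(\cA)\otimes_k(k[u^{\pm1}]/uk[u])$ converging to $HC_{\bullet}(\cA)$. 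The point is that the $d_1$ differential of this spectral sequence, up to the bookkeeping of the variable $u$, is precisely the composite $HH_n(\cA)\xrightarrow{B} HH_{n+1}(\cA)$ induced by the Connes differential, and more generally the higher differentials $d_r$ are the obstructions to lifting a Hochschild class to a class modulo $u^{r+1}$ in the negative cyclic complex.

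The key step is the identification: \emph{the spectral sequence degenerates at $E_1$ if and only if $\delta = 0$}. For one direction, suppose $\delta:HH_n(\cA)\to HC^-_{n+1}(\cA)$ vanishes. Then from the long exact sequence \eqref{eq:long_exact_HC^-} the maps $u:HC^-_{n+2}(\cA)\to HC^-_n(\cA)$ are surjective and the maps $HC^-_n(\cA)\to HH_n(\cA)$ are surjective, so every Hochschild class lifts to a negative cyclic class; unwinding the filtration, this says every class in $E_1$ is a permanent cycle and the sequence degenerates. For the converse, degeneration at $E_1$ forces all differentials $d_r$ to vanish; in particular the fact that a Hochschild class survives to $E_\infty$ means (again over a field, where there are no extension problems at the level of vanishing) that it lifts along $HC^-_n(\cA)\to HH_n(\cA)$, i.e. the connecting map $\delta$ is zero. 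I would phrase both directions cleanly by noting that $\delta = 0$ is equivalent to the short exact sequence \eqref{eq:short_exact_negative_cyclic} inducing short exact sequences $0\to HC^-_{n+2}\xrightarrow{u} HC^-_n\to HH_n\to 0$ on homology, and this is in turn equivalent to the $u$-adic (Hochschild-to-cyclic) spectral sequence collapsing.

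I expect the main obstacle to be the purely bookkeeping-but-delicate matching of the two formulations of the spectral sequence: the one in Conjecture \ref{conj:Kontsevich_degeneration_intro} is written with $E_1 = HH_\bullet(\cA)\otimes_k(k[u^{\pm1}]/uk[u])$ converging to $HC_\bullet(\cA)$, whereas the natural object carrying $\delta$ is the negative cyclic complex; passing between $HC^-$ and $HC$ requires knowing that degeneration for one is equivalent to degeneration for the other, which follows because $CC^{red}_\bullet(\cA) = CC^{-,red}_\bullet(\cA)\otimes_{k[u]}(k[u^{\pm1}]/uk[u])$ and the differentials are $u$-linear, so the whole picture is controlled by the single $k[u]$-module map $u$ acting on $HC^-_\bullet$. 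Once that translation is in place, the equivalence is a formal consequence of the long exact sequence \eqref{eq:long_exact_HC^-}: $\delta\equiv 0$ iff $u$ acts surjectively on $HC^-_\bullet(\cA)$ in each degree iff the spectral sequence has no nonzero differentials. I would also remark that characteristic zero is used only to the extent that the reduced complex is a legitimate model and no torsion phenomena interfere; the argument itself is characteristic-free at this formal level, but I keep the hypothesis since it is standing for the rest of the section.
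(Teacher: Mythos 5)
Your overall route is the same as the paper's: identify the differentials of the spectral sequence in Conjecture \ref{conj:Kontsevich_degeneration_intro} with those of the $u$-adic spectral sequence $HH_{\bullet}(\cA)[[u]]\Rightarrow HC^-_{\bullet}(\cA)$, and then use the long exact sequence \eqref{eq:long_exact_HC^-} to translate degeneration into surjectivity of $HC^-_n(\cA)\to HH_n(\cA)$, i.e.\ vanishing of $\delta$. The paper phrases the middle step as the existence of a $k$-linear section of $HC^-_{\bullet}(\cA)\to HH_{\bullet}(\cA)$ (equivalently a $k[u]$-module isomorphism $HH_{\bullet}(\cA)[[u]]\cong HC^-_{\bullet}(\cA)$), which is the same content as your ``every Hochschild class is a permanent cycle.''

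There is, however, a recurring error in how you read off the consequences of $\delta=0$ from \eqref{eq:long_exact_HC^-}. You assert that $\delta=0$ makes the maps $u:HC^-_{n+2}(\cA)\to HC^-_n(\cA)$ \emph{surjective}, and your closing summary claims ``$\delta\equiv 0$ iff $u$ acts surjectively on $HC^-_{\bullet}(\cA)$.'' This is backwards: exactness gives $\im(u)=\ker\bigl(HC^-_n(\cA)\to HH_n(\cA)\bigr)$, so surjectivity of $u$ would force $HC^-_n(\cA)\to HH_n(\cA)$ to be the zero map, and hence (again by exactness) $\delta$ to be \emph{injective} on $HH_n(\cA)$ --- essentially the opposite of what you want. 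The correct statement, which you in fact also write down, is that $\delta\equiv 0$ is equivalent to \eqref{eq:long_exact_HC^-} breaking into short exact sequences $0\to HC^-_{n+2}(\cA)\stackrel{u}{\to} HC^-_n(\cA)\to HH_n(\cA)\to 0$, with $u$ \emph{injective} and the projection to $HH_n(\cA)$ surjective. Since the only property your argument actually uses is the (correctly stated) surjectivity of $HC^-_n(\cA)\to HH_n(\cA)$, this is a repairable slip rather than a structural gap, but the chain of equivalences as literally written is false and should be corrected.
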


\begin{proof} Note that the differentials in the spectral sequence $$E_1=HH_{\bullet}(\cA)\otimes_k (k[u^{\pm 1}]/uk[u])\Rightarrow HC_{\bullet}(\cA)$$
are the same as differentials in the spectral sequence
$$E_1=HH_{\bullet}(\cA)[[u]]\Rightarrow HC^-_{\bullet}(\cA).$$
Further, degeneration of the latter spectral sequence is equivalent to the existence of a (non-canonical) isomorphism of $k[u]$-modules $$HH_{\bullet}(\cA)[[u]]\cong HC^-(\cA).$$
This in turn is equivalent to the existence of a $k$-linear section of the projection $HC^-_{\bullet}(\cA)\to HH_{\bullet}(\cA).$ From the long exact sequence \eqref{eq:long_exact_HC^-} we obtain that the existence of such section is equivalent to vanishing of the boundary map $\delta:HH_{\bullet}(\cA)\to HC^-_{\bullet+1}(\cA).$\end{proof}

Recall a well known statement about non-degeneracy of the Chern character of diagonal bimodule.

\begin{prop}\label{prop:nondeg_copairing}(\cite{Sh}) Let $\cA$ be a smooth and proper DG category. Then the element $$ch(I_{\cA})\in HH_{\bullet}(\cA^{op}\otimes\cA)\cong HH_{\bullet}(\cA^{op})\otimes HH_{\bullet}(\cA)$$ induces an isomorphism
$$HH_{\bullet}(\cA^{op})^{\vee}\stackrel{\sim}{\to} HH_{\bullet}(\cA).$$\end{prop}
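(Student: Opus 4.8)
The plan is to deduce the statement from the interplay of two standard facts: that Hochschild homology is a symmetric monoidal invariant, and that a smooth and proper DG category is a dualizable object of the symmetric monoidal category of DG categories. This is, in essence, the argument behind \cite{Sh}.

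First I would record that $\cA\mapsto C_{\bullet}^{red}(\cA)$ refines the K\"unneth isomorphism $HH_{\bullet}(\cA\otimes\cB)\cong HH_{\bullet}(\cA)\otimes HH_{\bullet}(\cB)$ to a symmetric monoidal functor from DG categories (with $\otimes$) to complexes of $k$-vector spaces (with $\otimes_k$); this is visible from the shuffle-product description of the cyclic bar complex, together with $C_{\bullet}^{red}(k)\simeq k$. Secondly I would invoke the well-known fact (see \cite{TV}) that in the symmetric monoidal category of idempotent-complete DG categories with the derived tensor product, $\cA$ is dualizable exactly when it is smooth and proper, its dual being $\cA^{op}$; moreover the coevaluation $k\to\cA\otimes\cA^{op}$ is the diagonal bimodule $I_{\cA}$ --- a legitimate morphism precisely because $\cA$ is smooth, i.e. $I_{\cA}\in\Perf(\cA\otimes\cA^{op})$ by Definition \ref{defi:smooth_proper} --- the evaluation $\cA^{op}\otimes\cA\to k$ is again $I_{\cA}$ (well defined because $\cA$ is proper), and the two triangle identities hold.

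Now I would apply the symmetric monoidal functor $HH_{\bullet}$ to this duality data. It sends $\cA$ to an object of $D(k)$ which is dualizable with dual $HH_{\bullet}(\cA^{op})$; since a dualizable object of $D(k)$ is a perfect complex, this forces $HH_{\bullet}(\cA^{op})\cong HH_{\bullet}(\cA)^{\vee}$ as graded vector spaces. It sends the coevaluation to the image of $1\in HH_0(k)$ under the map $HH_{\bullet}(k)\to HH_{\bullet}(\cA\otimes\cA^{op})$ induced by the bimodule $I_{\cA}$, and by the very definition of the Chern character of a perfect module this image is the element $ch(I_{\cA})\in HH_{\bullet}(\cA^{op})\otimes HH_{\bullet}(\cA)$. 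The triangle identities then say precisely that, under $HH_{\bullet}(\cA^{op})\otimes HH_{\bullet}(\cA)\cong HH_{\bullet}(\cA)^{\vee}\otimes HH_{\bullet}(\cA)\cong\End(HH_{\bullet}(\cA))$, the element $ch(I_{\cA})$ corresponds to the identity endomorphism, which is exactly the assertion that $ch(I_{\cA})$ induces an isomorphism $HH_{\bullet}(\cA^{op})^{\vee}\stackrel{\sim}{\to}HH_{\bullet}(\cA)$.

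The main obstacle is supplying the two structural inputs carefully and matching them along the triangle identities: upgrading K\"unneth to a genuine symmetric monoidal structure on the Hochschild complex functor, and pinning down the evaluation and coevaluation bimodules of the duality with the verification that smoothness and properness suffice for dualizability. Both are known but technical. An alternative, which is the route actually taken in \cite{Sh}, sidesteps the abstract machinery: one resolves $I_{\cA}$ over $\cA\otimes\cA^{op}$ by a finite twisted complex of representable bimodules, reads off $ch(I_{\cA})$ as the resulting explicit Hochschild cycle, and pairs it directly against the Shklyarov (``Mukai'') pairing $HH_{\bullet}(\cA)\otimes HH_{\bullet}(\cA^{op})\to k$, at which point the triangle identity becomes an explicit, if lengthy, computation with bar resolutions. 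I would present the conceptual argument and refer to \cite{Sh} for the computational one.
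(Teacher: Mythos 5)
Your argument is correct, and it is genuinely different from what the paper does: the paper offers no proof at all for this proposition, simply quoting it from \cite{Sh}, where it is established by the explicit bar-resolution computation you describe at the end. Your structural route --- smooth and proper DG categories are exactly the dualizable objects of the Morita category, with dual $\cA^{op}$ and with both evaluation and coevaluation given by $I_{\cA}$; the Hochschild complex is symmetric monoidal via the shuffle map; a symmetric monoidal functor preserves duality data, so $ch(I_{\cA})$ is a coevaluation for $HH_{\bullet}(\cA)$ and the triangle identities force the induced map $HH_{\bullet}(\cA^{op})^{\vee}\to HH_{\bullet}(\cA)$ to be an isomorphism --- is the standard modern proof and buys conceptual clarity plus the finite-dimensionality of $HH_{\bullet}(\cA)$ for free, at the cost of two nontrivial structural inputs. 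If you write it out, be careful on exactly the points you flag: the evaluation and coevaluation are bimodules, not DG functors, so you need $HH_{\bullet}$ to be symmetric monoidal on the Morita category whose morphisms are perfect bimodules (this is also where the identification of $ch(M)$ with the image of $1\in HH_0(k)$ under the map induced by $M$ lives); and the dualizability statement is folklore rather than something proved in \cite{TV}, so a more accurate reference (To\"en's lectures on DG categories, or \cite{Sh} itself in the guise of nondegeneracy of the Mukai pairing) would be preferable. With those caveats, either your conceptual argument or the computational one in \cite{Sh} legitimately establishes the proposition.
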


\begin{prop}\label{prop:generalized_implies_Kontsevich} Conjecture \ref{conj:generalized_degeneration_intro} implies Conjecture \ref{conj:Kontsevich_degeneration}, hence also Conjecture \ref{conj:Kontsevich_degeneration_intro}.\end{prop}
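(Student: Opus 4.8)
The plan is to deduce Conjecture~\ref{conj:Kontsevich_degeneration} from Conjecture~\ref{conj:generalized_degeneration_intro} by choosing the right small DG categories $\cB,\cC$ and the right K-theory class. Given a smooth and compact $\cA$, I would take $\cB=\cA^{op}$ and $\cC=\cA$, and feed into $\varphi_n$ the class of the diagonal bimodule $I_{\cA}\in\Perf(\cA\otimes\cA^{op})=\Perf(\cA^{op}\otimes\cA)$, which defines an element of $K_0(\cA^{op}\otimes\cA)$. Conjecture~\ref{conj:generalized_degeneration_intro} (for $n=0\le 0$) then says
$$\varphi_0([I_{\cA}])=(\id\otimes\delta)\bigl(ch([I_{\cA}])\bigr)=0$$
in $(HH_{\bullet}(\cA^{op})\otimes HC^-_{\bullet}(\cA))_{1}$. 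Now $ch([I_{\cA}])=ch(I_{\cA})\in HH_{\bullet}(\cA^{op})\otimes HH_{\bullet}(\cA)$ is exactly the element appearing in Proposition~\ref{prop:nondeg_copairing}, which is \emph{non-degenerate}: it induces an isomorphism $HH_{\bullet}(\cA^{op})^{\vee}\xrightarrow{\sim}HH_{\bullet}(\cA)$.

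The key algebraic step is then purely formal. Write $ch(I_{\cA})=\sum_i x_i\otimes y_i$ with $x_i\in HH_{\bullet}(\cA^{op})$, $y_i\in HH_{\bullet}(\cA)$; non-degeneracy means that the $y_i$ span $HH_{\bullet}(\cA)$ and, dually, that a class $\xi\in HH_{\bullet}(\cA^{op})^{\vee}$ is determined by the values $\langle\xi,x_i\rangle$, equivalently the $x_i$ span $HH_{\bullet}(\cA^{op})$. From $(\id\otimes\delta)(ch(I_{\cA}))=\sum_i x_i\otimes\delta(y_i)=0$ and the fact that the $x_i$ are a spanning set one extracts, after completing $\{x_i\}$ to a basis and using the dual basis, that $\delta(y_i)=0$ for all $i$; since the $y_i$ span $HH_{\bullet}(\cA)$ and $\delta$ is $k$-linear, this gives $\delta\equiv 0$ on $HH_{\bullet}(\cA)$, which is precisely Conjecture~\ref{conj:Kontsevich_degeneration}. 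The implication to Conjecture~\ref{conj:Kontsevich_degeneration_intro} is then immediate from Proposition~\ref{prop:equivalence_of_degeneration_conjectures}.

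I expect the main obstacle to be the bookkeeping in extracting $\delta(y_i)=0$ from the vanishing of $\sum_i x_i\otimes\delta(y_i)$ in the tensor product $HH_{\bullet}(\cA^{op})\otimes HC^-_{\bullet}(\cA)$: one must be careful that the tensor product is over $k$ of graded vector spaces of possibly infinite dimension, so ``separating variables'' needs the spanning/independence statements supplied by Proposition~\ref{prop:nondeg_copairing} together with the finite-dimensionality of $HH_{\bullet}(\cA)$ in each degree (which follows from smoothness and compactness). A clean way to phrase it: apply $\mathrm{ev}_{\phi}\otimes\id$ for an arbitrary functional $\phi\in HH_{\bullet}(\cA^{op})^{\vee}$ to get $\delta\bigl(\sum_i\phi(x_i)y_i\bigr)=0$; as $\phi$ ranges over all functionals the elements $\sum_i\phi(x_i)y_i$ range over all of $HH_{\bullet}(\cA)$ by the surjectivity in Proposition~\ref{prop:nondeg_copairing}, so $\delta=0$. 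Apart from this point the argument is routine: the functoriality diagrams of Lemma~\ref{lem:obvious_commutative_diagrams} are not even needed here, only the definitions of $ch$, $\delta$ and $\varphi_n$ together with Propositions~\ref{prop:equivalence_of_degeneration_conjectures} and~\ref{prop:nondeg_copairing}.
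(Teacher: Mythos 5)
Your proposal is correct and follows essentially the same route as the paper: specialize Conjecture~\ref{conj:generalized_degeneration_intro} to $\cB=\cA^{op}$, $\cC=\cA$ and the class $[I_{\cA}]\in K_0(\cA^{op}\otimes\cA)$, then use the non-degeneracy of $ch(I_{\cA})$ from Proposition~\ref{prop:nondeg_copairing} to conclude $\delta=0$. Your ``$\mathrm{ev}_{\phi}\otimes\id$'' argument is exactly the paper's observation that $\varphi_0([I_{\cA}])$ induces the map $\delta\circ\beta$ with $\beta$ an isomorphism, so the vanishing of $\varphi_0([I_{\cA}])$ forces $\delta=0$.
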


\begin{proof}Indeed, let us put $\cB:=\cA^{op},$ $\cC:=\cA,$ and consider the class $[I_{\cA}]\in K_0(\cB\otimes\cC).$
Conjecture \ref{conj:generalized_degeneration_intro} states that $\varphi_0([I_{\cA}])=0.$ But the element $\varphi_0([I_{\cA}])\in (HH_{\bullet}(\cA^{op})\otimes HC^{-}_{\bullet}(\cA))_1$ induces the map
$$\delta\circ\beta:HH_{\bullet}(\cA^{op})^{\vee}\to HC^-_{\bullet}(\cA)[-1],$$
where $\beta:HH_{\bullet}(\cA^{op})^{\vee}\stackrel{\sim}{\to} HH_{\bullet}(\cA)$ is an isomorphism from Proposition
\ref{prop:nondeg_copairing}. Since $\delta\circ\beta=0,$ we have $\delta=0.$ This proves the proposition.\end{proof}

Before we prove the main result, we will prove one more lemma. Denote by $\Vect_{\infty}$ the category of at most countable-dimensional vector spaces over $k.$  

\begin{lemma}\label{lem:vanishing_for_Vect_infty} We have $HH_{\bullet}(\Vect_{\infty})=0.$ Moreover, for any small DG category $\cA$ we have $K_{\bullet}(\Vect_{\infty}\otimes\cA)=0,$ \end{lemma}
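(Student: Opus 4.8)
The plan is to exploit the fact that $\Vect_\infty$ is a "flasque" category in the sense of admitting an Eilenberg swindle. The key structural input is that $\Vect_\infty$ is closed under countable direct sums; equivalently, $\Perf(\Vect_\infty)$ contains objects with arbitrary countable coproducts. First I would recall the standard cofinality/swindle argument: since $V \oplus V^{\oplus\infty} \cong V^{\oplus\infty}$ functorially in $V$, the identity functor on $\Vect_\infty$ is naturally isomorphic to $\mathrm{id} \oplus (-)^{\oplus\infty}$, while $(-)^{\oplus\infty}$ is also naturally isomorphic to its own shift by one copy. From this one extracts that on any additive invariant which sends countable coproducts (when they exist in the category) to the corresponding behavior — in particular on Hochschild homology and on Waldhausen $K$-theory — the identity map equals zero. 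Concretely, for $K$-theory this is the classical Eilenberg swindle: $[P] = [P] + [P^{\oplus\infty}] = [P^{\oplus\infty}]$ forces $[P]=0$ in $K_0$, and the analogous statement on the level of spectra (e.g. via an explicit homotopy coming from the infinite-sum endofunctor) gives $K_\bullet(\Vect_\infty)=0$.

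Second, for $HH_\bullet(\Vect_\infty)$ I would argue directly on the reduced Hochschild complex, or more cleanly using the Morita-invariance theorem of Keller already cited (\cite{Ke2}) together with additivity. Note $\Vect_\infty$ is Morita equivalent to its full subcategory on a single object $k^{\oplus\infty}$, which is the one-object DG category $E = \End(k^{\oplus\infty})$, the algebra of column-finite (or appropriately-finite) $\N\times\N$ matrices. This matrix algebra is well known to be "$H$-unital with trivial Hochschild homology" — indeed the swindle shows $HH_\bullet$ of a cone-ring-type algebra vanishes. Alternatively, and perhaps most transparently for the write-up, I would directly apply the swindle to the $HH$ complex: the endofunctor $(-)^{\oplus\infty}$ on $\Vect_\infty$ induces an endomorphism of the mixed complex $C^{red}_\bullet(\Vect_\infty)$, and the natural isomorphism $\mathrm{id} \cong \mathrm{id}\oplus(-)^{\oplus\infty}$ combined with $(-)^{\oplus\infty}\cong (-)^{\oplus\infty}\oplus(-)^{\oplus\infty}$ shows the induced map on $HH_\bullet$ satisfies $x = x + f(x)$ and $f(x) = f(x)+f(x)$, whence $f(x)=0$ and then $x=0$. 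One must be slightly careful that these functorial isomorphisms induce well-defined maps on Hochschild homology respecting the structure, but this is a routine check using functoriality of $C^{red}_\bullet(-)$.

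Third, for the "moreover" statement $K_\bullet(\Vect_\infty\otimes\cA)=0$: the category $\Vect_\infty\otimes\cA$ still admits countable coproducts of objects of the form $V\otimes X$ (taken in the first factor), and more to the point $\Perf(\Vect_\infty\otimes\cA)$ inherits a swindle endofunctor $(-)^{\oplus\infty}$ acting in the $\Vect_\infty$-direction. The same Eilenberg-swindle argument applies verbatim: the identity on the $K$-theory spectrum of $\Vect_\infty\otimes\cA$ is null-homotopic because $\mathrm{id}\oplus(-)^{\oplus\infty}\simeq(-)^{\oplus\infty}\simeq(-)^{\oplus\infty}\oplus(-)^{\oplus\infty}$ as endofunctors. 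Hence $K_\bullet(\Vect_\infty\otimes\cA)=0$.

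The main obstacle I anticipate is purely bookkeeping: making precise that the infinite-direct-sum endofunctor is actually defined on the relevant category (it is, since $\Vect_\infty$ by definition allows countable-dimensional spaces, so countable sums of objects stay in the category), that it is exact/admissible in the sense needed for Waldhausen $K$-theory (one works with the Waldhausen category structure on $\Perf$ where this is standard), and that the functorial isomorphisms $V\cong V\oplus V^{\oplus\infty}$, $V^{\oplus\infty}\cong V^{\oplus\infty}\oplus V^{\oplus\infty}$ really are natural transformations of the relevant functors so they descend to the invariants. Once those are in place, both vanishing statements are immediate consequences of the swindle; no genuine difficulty remains, and the lemma is essentially a formal consequence of flasqueness of $\Vect_\infty$.
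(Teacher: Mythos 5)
Your proposal is correct and is essentially the paper's own argument: the paper runs the Eilenberg swindle with the $k$-linear endofunctor $V\otimes -$ for a countable-dimensional $V$, using $V\cong V\oplus k$ to get $(V\otimes -)\cong (V\otimes -)\oplus \id$ and hence $[\id]=0$ on any additive invariant of $\Vect_{\infty}$ acting on $HH_{\bullet}(\Vect_{\infty})$ and $K_{\bullet}(\Vect_{\infty}\otimes\cA)$, which is exactly your $(-)^{\oplus\infty}$ swindle. Only note the slip in your second paragraph: the valid natural isomorphism is $\id\oplus(-)^{\oplus\infty}\cong(-)^{\oplus\infty}$, not $\id\cong\id\oplus(-)^{\oplus\infty}$; the former already gives $x+f(x)=f(x)$, i.e.\ $x=0$, as you correctly state elsewhere in the proposal.
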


\begin{proof}Note that any $k$-linear endofunctor $F:\Vect_{\infty}\to\Vect_{\infty}$ acts on the invariants $K_{\bullet}(\Vect_{\infty}\otimes\cA)$ and $HH_{\bullet}(\Vect_{\infty}).$ Moreover, $k$-linear endofunctors of the category $\Vect_{\infty}$ form an additive category, and the direct sum of endofunctors gives the sum of maps on invariants.

Let $V$ be a countable-dimensional vector space. We have an endofunctor $V\otimes-:\Vect_{\infty}\to \Vect_{\infty}.$ Choosing any isomorphism $V\cong V\oplus k,$ we obtain an isomorphism $$(V\otimes-)\cong(V\otimes-)\oplus\id$$
in the category of endofunctors $\Vect_{\infty}.$ It follows that the identity functor acts by zero on our invariants, which implies their vanishing.\end{proof}

Now we prove the main result of this paper.

\begin{theo}\label{th:Kontsevich_implies_generalized} Conjectures \ref{conj:Kontsevich_degeneration_intro} and \ref{conj:categorical_compactification_intro} imply Conjecture \ref{conj:generalized_degeneration_intro}.\end{theo}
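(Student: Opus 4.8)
The plan is to reduce the general case of Conjecture \ref{conj:generalized_degeneration_intro} to the smooth and compact case, where Conjecture \ref{conj:Kontsevich_degeneration} applies directly, using Conjecture \ref{conj:categorical_compactification_intro} as the bridge. The first step is a reduction to the homotopically finite case. Since $\varphi_n$ is a natural transformation of invariants and $K_n$, $HH_\bullet$, $HC^-_\bullet$ all commute with filtered homotopy colimits, and any small DG category is a filtered colimit of its homotopically finite (equivalently, finite-type) DG subcategories, it suffices to prove $\varphi_n = 0$ when both $\cB$ and $\cC$ are homotopically finite; I would want to be careful here that the relevant continuity statements really hold for $n \le 0$, but this is standard. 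So assume $\cB$, $\cC$ are homotopically finite.

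**Main reduction via categorical compactification.** Apply Conjecture \ref{conj:categorical_compactification_intro} to each of $\cB$ and $\cC$: there are smooth and compact DG categories $\tilde{\cB}$, $\tilde{\cC}$ and objects $E\in\tilde{\cB}$, $E'\in\tilde{\cC}$ with $\cB \simeq \tilde{\cB}/E$, $\cC \simeq \tilde{\cC}/E'$ in $\Ho_M(\dgcat_k)$. Now consider the projection DG functors $\pi_{\cB}:\tilde{\cB}\to\tilde{\cB}/E \simeq \cB$ and $\pi_{\cC}:\tilde{\cC}\to\tilde{\cC}/E'\simeq\cC$. By Lemma \ref{lem:obvious_commutative_diagrams}(2), the square
\begin{equation}
\label{eq:main_reduction_square}
\begin{CD}
K_n(\tilde{\cB}\otimes\tilde{\cC}) @> (\pi_{\cB}\otimes\pi_{\cC})^* >> K_n(\cB\otimes\cC)\\
@V \varphi_n VV @V \varphi_n VV\\
(HH_{\bullet}(\tilde{\cB})\otimes HC^-_{\bullet}(\tilde{\cC}))_{n+1} @>>> (HH_{\bullet}(\cB)\otimes HC^-_{\bullet}(\cC))_{n+1}
\end{CD}
\end{equation}
commutes. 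On the left-hand vertical map, $\varphi_n$ factors through $\id\otimes\delta$ where $\delta:HH_\bullet(\tilde{\cC})\to HC^-_{\bullet+1}(\tilde{\cC})$, and by Proposition \ref{prop:equivalence_of_degeneration_conjectures} together with Conjecture \ref{conj:Kontsevich_degeneration_intro} this $\delta$ vanishes (as $\tilde{\cC}$ is smooth and compact); hence the left vertical arrow is zero. Therefore the composite $\varphi_n\circ(\pi_{\cB}\otimes\pi_{\cC})^*$ vanishes, and it remains to show that $(\pi_{\cB}\otimes\pi_{\cC})^*:K_n(\tilde{\cB}\otimes\tilde{\cC})\to K_n(\cB\otimes\cC)$ is surjective for $n\le 0$.

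**Surjectivity of the K-theory map — the main obstacle.** This is where the real work lies. By Proposition \ref{prop:DG_localizations}, $\bL\pi_{\cB}^*:\Perf(\tilde{\cB})\to\Perf(\cB)$ is a localization up to direct summands (with kernel generated by $E$), and similarly for $\cC$; tensoring, $\tilde{\cB}\otimes\tilde{\cC}\to\cB\otimes\cC$ induces on perfect complexes a functor that is, up to Karoubi completion, the Verdier quotient by the thick subcategory $\langle E\otimes\tilde{\cC},\ \tilde{\cB}\otimes E'\rangle$. The Thomason–Neeman localization theorem gives a long exact sequence relating $K_\bullet$ of these categories, where the fiber term is the K-theory of that thick subcategory; the failure of surjectivity on $K_0$ is controlled by $K_{-1}$ of the fiber and by a Karoubi-completion discrepancy. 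The standard device to kill these, which I expect the proof to use, is to replace $\tilde{\cB}$ and $\tilde{\cC}$ by $\tilde{\cB}\otimes\Vect_\infty$-type enlargements — more precisely, to glue on a copy of $\Vect_\infty$ — so that the ambient categories become "flasque" enough that their negative K-theory and the Karoubi obstruction disappear, while by Lemma \ref{lem:vanishing_for_Vect_infty} this does not disturb the relevant Hochschild/cyclic invariants (which see only the genuine categorical compactification), and Proposition \ref{prop:properties_of_hfp}(5) keeps everything homotopically finite so Conjecture \ref{conj:categorical_compactification_intro} can be applied to the enlarged categories. Assembling the Thomason–Neeman sequence for the enlarged setup, together with $\varphi_n$-naturality and the vanishing on the smooth-compact side, should force $\varphi_n = 0$ for $n\le 0$. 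The delicate point — the one I would spend the most care on — is organizing the enlargements so that \emph{simultaneously} (i) the K-theory comparison map becomes surjective in degrees $\le 0$, (ii) the Hochschild and negative cyclic invariants are unchanged (using additivity, Lemma \ref{lem:additivity_of_Hochschild}, and the vanishing $HH_\bullet(\Vect_\infty)=0$), and (iii) homotopical finiteness is preserved so that the compactification conjecture remains applicable.
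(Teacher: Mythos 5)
Your overall architecture (reduce to homotopically finite categories, compactify, use vanishing of $\delta$ on the smooth and compact side, transport along the localization functors) matches the paper's in spirit, but the way you implement the middle step leaves a genuine gap that the paper's proof is specifically designed to avoid. You compactify $\cB$ and $\cC$ \emph{separately} and then need surjectivity of $(\pi_{\cB}\otimes\pi_{\cC})^*:K_n(\tilde{\cB}\otimes\tilde{\cC})\to K_n(\cB\otimes\cC)$. You correctly identify this as the main obstacle, but the remedy you sketch does not work: gluing a copy of $\Vect_\infty$ onto $\tilde{\cB}$ or $\tilde{\cC}$ destroys both compactness and homotopical finiteness ($\Vect_\infty$ is not homotopically finite, so Proposition \ref{prop:properties_of_hfp}(5) does not apply), and once $\tilde{\cC}$ is no longer smooth and compact the vanishing of $\delta$ on $HH_\bullet(\tilde{\cC})$ --- the whole point of the left vertical arrow in your square --- is no longer available. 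So you cannot simultaneously make the K-theory comparison map surjective and keep the degeneration conjecture applicable; the two requirements pull in opposite directions, and no argument is given that resolves the tension.

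The paper sidesteps surjectivity entirely. In degree $0$ it suffices to kill $\varphi_0$ on classes $[M]$ of perfect bimodules $M\in\Perf(\cB\otimes\cC)$, since these generate $K_0$. The trick is to form the gluing $\cD=\cB\sqcup_M\cC^{op}$, which is again homotopically finite, and to compactify $\cD$ \emph{itself} rather than $\cB$ and $\cC$: the distinguished triangle \eqref{eq:diagonal_for_gluing} shows that $ch([I_{\cD}])$ has $-ch([M])$ as its component in $HH_{\bullet}(\cB)\otimes HH_{\bullet}(\cC)$, so the single vanishing $\varphi_0([I_{\cD}])=0$ --- proved by pulling $I_{\cD}$ back from a compactification of $\cD$, where Proposition \ref{prop:DG_localizations}(3) identifies the diagonal bimodules --- yields $\varphi_0([M])=0$. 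Only the one distinguished class $[I_{\cD}]$ ever needs to be lifted, and Proposition \ref{prop:DG_localizations}(3) does exactly that; no surjectivity statement in K-theory is required. For $n<0$ the paper does use $\Vect_\infty$, but in a different role: the suspension $\Sigma\cB=(\Vect_\infty\otimes\cB)/\cB$ gives $K_{-l-1}(\cB\otimes\cC)\cong K_{-l}(\Sigma\cB\otimes\cC)$ and $HH_{\bullet}(\Sigma\cB)\cong HH_{\bullet}(\cB)[1]$, so the negative-degree case follows by induction from the degree-$0$ case applied to $\Sigma\cB$ (an arbitrary small DG category, covered by the filtered-colimit step), with no continuity of negative K-theory along filtered colimits needed.
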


\begin{proof} According to Proposition \ref{prop:equivalence_of_degeneration_conjectures}, we can consider Conjecture \ref{conj:Kontsevich_degeneration} instead of Conjecture \ref{conj:Kontsevich_degeneration_intro}.

Recall that for small DG categories $\cB$ and $\cC$ we denote by $\varphi_n$ the composition
$$\varphi_n=(\id\otimes\delta)\circ ch:K_n(\cB\otimes\cC)\to (HH_{\bullet}(\cB)\otimes HC^-_{\bullet}(\cC))_{n+1}.$$

Consider the following intermediate statements.

(i) For any homotopically finite DG category $\cA,$ we have $\varphi_0([I_{\cA}])=0,$ where $$\varphi_0:K_0(\cA\otimes\cA^{op})~\to~(HH_{\bullet}(\cA)\otimes HC^-_{\bullet}(\cA^{op}))_1.$$
Recall that according to Proposition \ref{prop:properties_of_hfp} homotopy finiteness of $\cA$ implies smoothness, hence the class $[I_{\cA}]\in K_0(\cA\otimes\cA^{op})$ is well defined.

(ii) For any homotopically finite DG categories $\cB$ and $\cC,$ the map $$\varphi_0:K_0(\cB\otimes \cC)\to (HH_{\bullet}(\cB)\otimes HC^{-}_{\bullet}(\cC))_1$$ equals to $0.$

(iii) For any small DG categories $\cB$ and $\cC,$ the map $\varphi_0:~K_0(\cB~\otimes~ \cC)~\to~ (HH_{\bullet}(\cB)\otimes HC^{-}_{\bullet}(\cC))_1$ equals to $0.$

We will divide the proof into several steps.

{\noindent{\it Step 1.}} First we will show that Conjectures \ref{conj:Kontsevich_degeneration} and \ref{conj:categorical_compactification_intro} imply (i).

Let $\cA$ be a homotopically finite DG category. According to Conjecture \ref{conj:categorical_compactification_intro} and Proposition \ref{prop:DG_localizations}, there exist a smooth and proper DG category $\tilde{\cA}$ and a DG functor $F:\tilde{\cA}\to\cA,$ such that we have an isomorphism $\bL(F\otimes F^{op})^*I_{\tilde{\cA}}\cong I_{\cA}$ in $D(\cA\otimes\cA^{op}).$ From this and from Lemma \ref{lem:obvious_commutative_diagrams} we obtain $\varphi_0([I_{\cA}])=(F^*\otimes (F^{op})^*)(\varphi_0([I_{\tilde{\cA}}])).$ Conjecture \ref{conj:Kontsevich_degeneration} immediately implies that $\varphi_0([I_{\tilde{\cA}}])=0.$ Thus, $\varphi_0([I_{\cA}])=0.$ This proves (i).

{\noindent{\it Step 2.}} We show that (i) implies (ii). Let $\cB$ and $\cC$ be homotopically finite DG categories, and $M\in\Perf(\cB\otimes\cC)$ a perfect bimodule. Consider the gluing $\cD:=\cB\sqcup_M\cC^{op}.$ By Proposition \ref{prop:properties_of_hfp}, 5), DG category $\cD$ is also homotopically finite. Denote by $\iota_{\cB}:\cB\to \cD$ and $\iota_{\cC}:\cC^{op}\to\cD$ the tautological embedding DG functors.

By Lemma \ref{lem:additivity_of_Hochschild} we have decompositions of mixed complexes $$C_{\bullet}^{red}(\cD)=C_{\bullet}^{red}(\cB)\oplus C_{\bullet}^{red}(\cC^{op}),\quad C_{\bullet}^{red}(\cD^{op})=C_{\bullet}^{red}(\cB^{op})\oplus C_{\bullet}^{red}(\cC).$$

In particular, we have a decomposition of the tensor product of Hochschild homology: 
\begin{multline}\label{eq:direct_sum_decomposition} HH_{\bullet}(\cD)\otimes HH_{\bullet}(\cD^{op})\cong HH_{\bullet}(\cB)\otimes HH_{\bullet}(\cB^{op})\oplus HH_{\bullet}(\cC^{op})\otimes HH_{\bullet}(\cC)\\\oplus HH_{\bullet}(\cB)\otimes HH_{\bullet}(\cC)\oplus HH_{\bullet}(\cC^{op})\otimes HH_{\bullet}(\cB^{op}).\end{multline}

We have a distinguished triangle in $\Perf(\cD\otimes\cD^{op})$ (see e.g. \cite{LS}):
\begin{equation}\label{eq:diagonal_for_gluing}\bL(\iota_{\cB}\otimes\iota_{\cC}^{op})^*M\to\bL(\iota_{\cB}\otimes\iota_{\cB}^{op})^*I_{\cB}\oplus \bL(\iota_{\cC}\otimes\iota_{\cC}^{op})^*I_{\cC}\to I_{\cD}
\end{equation}

From \eqref{eq:diagonal_for_gluing} we immediately obtain that the element  $ch([I_{\cD}])\in (HH_{\bullet}(\cD)\otimes HH_{\bullet}(\cD^{op}))_0$ has components $$ch([I_{\cB}])\in (HH_{\bullet}(\cB)\otimes HH_{\bullet}(\cB^{op}))_0,\quad ch([I_{\cC}])\in (HH_{\bullet}(\cC^{op})\otimes HH_{\bullet}(\cC))_0,$$
$$-ch([M])\in (HH_{\bullet}(\cB)\otimes HH_{\bullet}(\cC))_0,\quad 0\in HH_{\bullet}(\cC^{op})\otimes HH_{\bullet}(\cB^{op}),$$
with respect to the decomposition \eqref{eq:direct_sum_decomposition}.
From (i) we have $\varphi_0([I_{\cD}])=(\id\otimes\delta)(ch([I_{\cD}]))=0.$ By commutativity of the diagram \eqref{eq:comm_diagram_for_delta} we obtain that $\varphi_0([M])=(\id\otimes\delta)(ch([M]))=0.$ This proves (ii).

{\noindent{\it Step 3.}} Let us show that (ii) implies (iii). Let $\cB$ and $\cC$ be arbitrary small DG categories. 
According to \cite{TV}, Proposition 2.2, wecan represent $\cB$ and $\cC$ as filtered colimits of homotopically finite DG categories: $\cB=\colim\limits_{I}\cB_i,$ $\cC=\colim\limits_{J}\cC_j.$
It follows from \cite{TV}, Lemma 2.10, that $K_0$ commutes with filtered colimits of DG categories. In particular, we have \begin{equation}\label{eq:colimit_of_K_0} K_0(\cB\otimes\cC)=\colim_{I\times J}K_0(\cB_i\otimes\cC_j).\end{equation}

Denote by $f_i:\cB_i\to \cB,$ $g_j:\cC_j\to\cC$ the natural DG functors.
Take an arbitrary class $\alpha\in K_0(\cB\otimes\cC).$ Then \eqref{eq:colimit_of_K_0} implies that there exist $i\in I,$ $j\in J,$ and $\gamma\in K_0(\cB_i\otimes\cC_j),$ such that $\alpha=(f_i\otimes g_j)^*(\gamma).$ From (ii) we have $\varphi_0(\gamma)=0.$ From this and from Lemma \ref{lem:obvious_commutative_diagrams} we obtain $\varphi_0(\alpha)=(f_i^*\otimes g_j^*)(\varphi_0(\gamma))=0.$ This proves (iii).

{\noindent{\it Step 4.}} Let us show that (iii) implies Conjecture \ref{conj:generalized_degeneration_intro}. Let $\cB$ and $\cC$ be small DG categories. We prove by induction on $n\geq 0$ that the map $$\varphi_{-n}:K_{-n}(\cB\otimes\cC)\to (HH_{\bullet}(\cB)\otimes HC^{-}_{\bullet}(\cC))_{-n+1}$$
equals $0.$

The base of induction for $n=0$ coincides with the statement (iii).

Suppose that the statement of induction is proved for $n=l\geq 0.$ Let us prove it for $n=l+1.$

We have a natural fully faithful DG functor $k\otimes-:\cB\to \Vect_{\infty}\otimes\cB.$ Let us take the DG quotient $\Sigma\cB:=(\Vect_{\infty}\otimes\cB)/\cB.$ We have a natural quasi-equivalence.$\Sigma(\cB\otimes\cC)\simeq(\Sigma\cB)\otimes\cC.$

By Lemma \ref{lem:vanishing_for_Vect_infty}, we have vanishing $K_{\bullet}(\Vect_{\infty}\otimes\cB\otimes\cC)=0,$ $HH_{\bullet}(\Vect_{\infty}\otimes\cB)=0.$ From the long exact sequence of K-groups and from Lemma \ref{lem:vanishing_for_Vect_infty} we obtain the natural isomorphism $K_{-l-1}(\cB\otimes\cC)\cong K_{-l}(\Sigma(\cB)\otimes\cC).$ Moreover, from the long exact sequence of Hochschild homology and from Lemma \ref{lem:vanishing_for_Vect_infty} we obtain the natural isomorphism $HH_{\bullet}(\Sigma\cB)\cong HH_{\bullet}(\cB)[1].$
These isomorphisms fit into the following commutative diagram:
$$
\begin{CD}
K_{-l-1}(\cB\otimes\cC) @>ch >> (HH_{\bullet}(\cB)\otimes HH_{\bullet}(\cC))_{-l-1} @>\id\otimes\delta >> (HH_{\bullet}(\cB)\otimes HC^-_{\bullet}(\cC))_{-l}\\
@V\sim VV @V\sim VV @V\sim VV\\
K_{-l}(\Sigma(\cB)\otimes\cC) @>ch >> (HH_{\bullet}(\Sigma \cB)\otimes HH_{\bullet}(\cC))_{-l} @>\id\otimes\delta >> (HH_{\bullet}(\Sigma\cB)\otimes HC^-_{\bullet}(\cC))_{-l+1}.
\end{CD}
$$
Indeed, commutativity of the right square is obvious, and commutativity of the left square follows from the fact that Chern character gives a morphism of long exact sequences of localization in K-theory and Hochschild homology \cite{Ke2}, \cite{CT}.

By the inductive hypothesis, the composition of the lower horizontal arrows equals to zero. Since the vertical arrows are isomorphisms, the composition of upper horizontal arrows equals to zero. This proves the inductive step.

Theorem is proved.
\end{proof}

\end{document}